\newtheorem{theorem}{Theorem}
\theoremstyle{plain}
\newtheorem{claim}[theorem]{Claim}
\newtheorem{definition}[theorem]{Definition}
\newtheorem{lemma}[theorem]{Lemma}
\newtheorem{proposition}[theorem]{Proposition}
\def\a{\alpha}
\def\eps{\epsilon}
\def\sqe{\sqrt{3\epsilon'}}
\def\sqa{\sqrt{3\alpha'}}
\def\G{\mathcal{G}}
\begin{document}
\title{On multipartite Hajnal-Szemer\'edi theorems}
\thanks{The second author was supported NSA grants H98230-10-1-0165 and H98230-12-1-0283.}
\author{Jie Han}
\address[Jie Han and Yi Zhao]
{Department of Mathematics and Statistics,\newline
\indent Georgia State University, Atlanta, GA 30303}
\email[Jie Han]{jhan22@gsu.edu}
\author{Yi Zhao}
\email[Yi Zhao]{\texttt{yzhao6@gsu.edu}}%
\date{\today}
\subjclass{Primary 05C35, 05C70} %
\keywords{graph packing, Hajnal-Szemer\'edi, absorbing method}%
\begin{abstract}
Let $G$ be a $k$-partite graph with $n$ vertices in parts such that each vertex is
adjacent to at least $\delta^*(G)$ vertices in each of the other parts. Magyar and Martin \cite{MaMa}
proved that for $k=3$, if $\delta^*(G)\ge \frac{2}{3}n +1$ and $n$ is sufficiently
large, then $G$ contains a $K_3$-factor (a spanning subgraph consisting of $n$ vertex-disjoint copies
of $K_3$). Martin and Szemer\'edi \cite{MaSz} proved that $G$ contains a $K_4$-factor when $\delta^*(G)\ge \frac{3}{4}n$ and $n$ is sufficiently large. Both results were proved using the Regularity Lemma.
In this paper we give a proof of these two results by the absorbing method. Our absorbing lemma actually works for all $k\ge 3$ and may be utilized to prove a general and tight multipartite Hajnal-Szemer\'edi theorem.
\end{abstract}

\maketitle

\section{Introduction}

Let $H$ be a graph on $h$  vertices, and let $G$ be a graph on $n$
vertices.  \emph{Packing} (or \emph{tiling}) problems in extremal
graph theory are investigations of conditions under which $G$ must
contain many vertex disjoint copies of $H$ (as subgraphs), where
minimum degree conditions are studied the most. An $H$-matching of $G$
is a subgraph of $G$ which consists of vertex-disjoint copies of
$H$. A {\em perfect $H$-matching}, or {\em $H$-factor}, of $G$ is an
$H$-matching consisting of $\lfloor n/h\rfloor$ copies of $H$.  Let $K_k$ denote the
complete graph on $k$ vertices. The celebrated theorem
of Hajnal and Szemer\'edi \cite{HaSz} says that every $n$-vertex graph $G$  with $\delta(G)\ge ( k-1 )n /k$ contains a $K_k$-factor  (see \cite{KK} for another proof).

Using the Regularity Lemma of Szemer\'edi \cite{Sz}, researchers have generalized this
theorem for packing arbitrary $H$ \cite{AlYu2, KSS-AY, AliYi, KuOs}.
Results and methods for packing problems can be found in the survey of K\"{u}hn and Osthus \cite{KuOs-survey}.

In this paper we consider multipartite packing, which restricts $G$ to be a $k$-partite graph for $k\ge 2$.
A $k$-partite graph is called \emph{balanced} if its partition sets have the same size.
Given a $k$-partite graph $G$, it is natural to consider the minimum partite degree ${\delta}^*(G)$, the minimum degree from a vertex in one partition set to any other partition set. When $k=2$, $\delta^*(G)$ is simply $\delta(G)$. In most of the rest of this paper, the minimum degree condition stands for the minimum partite degree for short.

Let $\G_k(n)$ denote the family of balanced $k$-partite graphs with $n$ vertices in each of its partition sets. It is easy to see (e.g. using the K\"{o}nig-Hall Theorem) that every bipartite graph $G\in \G_2(n)$ with $\delta^*(G)\ge n/2$ contains a $1$-factor. Fischer~\cite{Fischer} conjectured that if $G\in \G_k(n)$ satisfies
\begin{equation}
\label{eq:md1}
{\delta}^*(G)\ge \frac{k-1}{k} n,
\end{equation}
then $G$ contains a $K_k$-factor and proved the existence of an \emph{almost} $K_k$-factor for $k=3, 4$. Magyar and Martin \cite{MaMa} noticed that the condition \eqref{eq:md1} is not sufficient for odd $k$ and instead proved the following theorem for $k=3$. (They actually showed that when $n$ is divisible by $3$, there is only one graph in $\G_3(n)$, denoted by $\Gamma_3(n/3)$, that satisfies \eqref{eq:md1} but fails to contain a $K_3$-factor, and adding any new edge to $\Gamma_3(n/3)$ results in a $K_3$-factor.)

\begin{theorem}[\cite{MaMa}]
There exists an integer $n_0$ such that If $n\ge n_0$ and $G\in \G_3(n)$ satisfies ${\delta}^*(G)\ge 2n/3 +1$, then $G$ contains a $K_3$-factor. \label{thm:MM}
\end{theorem}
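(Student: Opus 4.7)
The plan is to prove Theorem~\ref{thm:MM} by the absorbing method, which bypasses the Szemer\'edi Regularity Lemma used by Magyar and Martin. The argument splits into three steps: (i) construct a small \emph{absorbing set} $A\subseteq V(G)$, balanced across the three parts, with the property that for every balanced triple $\{v_1,v_2,v_3\}$ with $v_i\in V_i\setminus A$ the induced graph $G[A\cup\{v_1,v_2,v_3\}]$ has a $K_3$-factor; (ii) find a near-perfect $K_3$-tiling of $G[V\setminus A]$ leaving only $o(n)$ leftover vertices, still balanced across the three parts; and (iii) absorb the leftover triples one at a time into $A$ to obtain a $K_3$-factor of $G$.

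For step (i), given a balanced triple $(v_1,v_2,v_3)$ with $v_i\in V_i$, I would call a balanced $6$-set $S=\{a_1,b_1,a_2,b_2,a_3,b_3\}$ (with $a_i,b_i\in V_i$) an \emph{absorber} for this triple if $\{a_1,a_2,a_3\}$ and $\{b_1,b_2,b_3\}$ are triangles in $G$ and, in addition, $\{v_1,b_2,a_3\}$, $\{a_1,v_2,b_3\}$, $\{b_1,a_2,v_3\}$ are triangles in $G$ (so $G[S]$ and $G[S\cup\{v_1,v_2,v_3\}]$ both have $K_3$-factors). The goal is to show that every balanced triple admits $\Omega(n^6)$ absorbers; then a standard probabilistic selection plus deletion step picks $A$ of size $o(n)$ retaining at least one absorber per triple. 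For step (ii), the graph $G':=G[V\setminus A]$ still satisfies $\delta^*(G')\ge (2/3-o(1))|V_i\setminus A|$, which is well above the threshold for an almost-$K_3$-factor; such an almost tiling can be obtained by a direct counting or fractional-matching argument (no Regularity Lemma needed), and a simple divisibility check forces the leftover to be balanced across parts. Step (iii) is then a routine iteration of the absorbing property.

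The principal obstacle is the absorbing lemma, because $\delta^*(G)\ge 2n/3$ is \emph{exactly} the threshold at which the naive inclusion-exclusion count of absorbers collapses: three overlapping neighborhoods of size $2n/3$ inside a part of size $n$ intersect in only $\ge 0$ vertices, which is not enough to produce absorbers in bulk. To handle this I would split on whether $G$ is close to the extremal graph $\Gamma_3(n/3)$. In the \emph{non-extremal} case a quantifiable separation from $\Gamma_3(n/3)$ buys an $\eta n$ slack in each pairwise neighborhood intersection, yielding the required $\Omega(n^6)$ absorbers uniformly over all triples and letting the scheme above go through. In the \emph{extremal} case one instead exploits the rigid almost-tripartite-blow-up structure of $G$ together with the strict inequality $\delta^*(G)\ge 2n/3+1$ to build a $K_3$-factor directly by a short combinatorial argument, mirroring how the extra $+1$ is used to rule out $\Gamma_3(n/3)$ itself. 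I expect the absorbing portion of this split to be the reusable component that, per the abstract, extends to all $k\ge 3$.
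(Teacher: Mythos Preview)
Your high-level plan matches the paper's: absorbing method plus an extremal/non-extremal dichotomy, with the almost-cover and the extremal case taken as black boxes. The genuine gap is in step (i). Your assertion that ``non-extremal buys an $\eta n$ slack in each pairwise neighborhood intersection'' is not correct: non-extremality is a global statement (no single sparse $\lfloor n/3\rfloor\times\lfloor n/3\rfloor\times\lfloor n/3\rfloor$ block), and it does \emph{not} force $|N(u)\cap N(v)\cap V_i|\ge n/3+\eta n$ for every relevant pair. At the exact threshold $\delta^*=2n/3$ your $6$-set absorber count can collapse to $o(n^6)$ for particular triples even when $G$ is far from $\Gamma_3(n/3)$; this is precisely why Lo--Markstr\"om need the extra $+\alpha n$ in their hypothesis. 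The paper's fix is a different absorbing structure built from a \emph{reachability} notion between two vertices $x,y$ in the same part, where the connecting set is allowed to have size either $k-1$ \emph{or} $2k-1$ (Lemma~\ref{lem:reach}). The dichotomy is then argued contrapositively: if some pair $x,y\in V_i$ fails both reachability options, one extracts enough structure around $x$ and $y$ (via Lemma~\ref{lem3} applied to $N(x)\cap N(y)$) to force $G$ to be $(\Delta/6,\Delta/2)$-approximate to $\Theta_{k\times k}(n/k)$. Because reachability may need $(2k-1)$-sets, the resulting absorbers have size $2k(k-1)=12$, not $6$; this ``more complicated absorbing structure'' is exactly what the introduction advertises.

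Two smaller points. The paper does not redo the extremal case or the almost-covering step; it quotes Theorem~\ref{thm:ext} and Lemma~\ref{lem:ac} from \cite{MaMa,MaSz} verbatim, so your proposed ``short combinatorial argument'' for the extremal case is neither needed nor short in the originals. And the Absorbing Lemma is deliberately proved under the weaker hypothesis $\delta^*(G)\ge(1-1/k-\alpha)n$, because after removing the absorbing matching $M$ the remaining graph has $\delta^*$ below $(1-1/k)n'$ and Lemma~\ref{lem:ac} must still apply there.
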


On the other hand, Martin and Szemer\'edi \cite{MaSz} proved the original conjecture holds for $k=4$.

\begin{theorem}[\cite{MaSz}]
There exists an integer $n_0$ such that if $n\ge n_0$ and $G\in \G_4 (n)$ satisfies ${\delta}^*(G)\ge 3n/4$,
then $G$ contains a $K_4$-factor. \label{thm:MS}
\end{theorem}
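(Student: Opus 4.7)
The plan is to prove Theorem~\ref{thm:MS} via the \emph{absorbing method}, whose two main ingredients are an \emph{absorbing lemma}---producing a small balanced set $A \subseteq V(G)$ that can swallow any tiny balanced leftover---and an \emph{almost-tiling lemma}---producing a $K_4$-matching of $G - A$ covering all but a tiny balanced leftover. Chaining the two yields a $K_4$-factor of $G$.

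For the absorbing lemma I would show: for every sufficiently small $\eta > 0$ there exists $\alpha$ with $\eta^2 \ll \alpha \ll \eta$, and a balanced $A \subseteq V(G)$ with $|A \cap V_i| = \alpha n$ for each $i$, such that for every balanced $L \subseteq V(G) \setminus A$ with $|L \cap V_i| \le \eta^2 n$, the graph $G[A \cup L]$ admits a $K_4$-factor. The construction of $A$ follows the probabilistic template of R\"odl--Ruci\'nski--Szemer\'edi. For each transversal quadruple $T = (v_1, v_2, v_3, v_4)$ with $v_i \in V_i$, define an \emph{absorber} to be a balanced $8$-vertex set $U_T$ (two per part), disjoint from $T$, such that both $G[U_T]$ and $G[U_T \cup T]$ admit transversal $K_4$-factorizations. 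Using $\delta^*(G) \ge 3n/4$---so that any three vertices in distinct parts share at least $n/4$ common neighbors in the fourth part---I would prove every $T$ has $\Omega(n^{8})$ absorbers. Sampling $A$ uniformly at rate $\alpha$ then yields, via a standard first/second moment argument and a deletion step for overlapping absorbers, a pairwise-disjoint family inside $A$ containing many absorbers per $T$; $L$ is absorbed one transversal at a time.

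For the almost-tiling step I would invoke Fischer's almost-$K_4$-factor result (cited in the introduction), which guarantees that every balanced $4$-partite graph on $n'$ vertices per part with partite minimum degree $\ge 3n'/4 - o(n')$ contains a $K_4$-matching missing only $o(n')$ vertices per part. Applied to $G - A$ (with $\delta^*(G - A) \ge 3n/4 - \alpha n$ on $n - \alpha n$ vertices per part), this produces a $K_4$-matching whose unmatched set $L$ is balanced with $|L \cap V_i| \le \eta^2 n$; the absorbing property of $A$ then completes a $K_4$-factor of $G$.

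The main obstacle is the absorbing lemma, specifically the design and counting of absorbers $U_T$ that work uniformly for every transversal $T$, \emph{including} those spanning no edge of $G$. A natural candidate for $U_T$ is a disjoint union of two transversal $K_4$'s on vertex sets $\{a_1, a_2, a_3, a_4\}$ and $\{b_1, b_2, b_3, b_4\}$, selected together with a ``swap pattern'' that routes each $v_i$ into one of three transversal $K_4$'s of $G[U_T \cup T]$; enumerating enough swap patterns and exploiting $\delta^*(G) \ge 3n/4$ for each should deliver the target $\Omega(n^{8})$ count. Since the paper advertises the same absorbing lemma for all $k \ge 3$, I would design the absorber and carry out the counting uniformly in $k$, specializing to $k = 3$ (with $6$-vertex absorbers, used for Theorem~\ref{thm:MM}) and $k = 4$.
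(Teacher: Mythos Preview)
Your plan has a genuine gap at the heart of the absorbing step: the claim that \emph{every} transversal $T$ admits $\Omega(n^{8})$ eight-vertex absorbers is false under the hypothesis $\delta^*(G)\ge 3n/4$. Take $G=\Theta_{4\times 4}(n/4)$, the blow-up with parts $V_{ij}$ ($i,j\in[4]$) and edges exactly between $V_{ij}$ and $V_{i'j'}$ when $i\ne i'$ and $j\ne j'$; this graph has $\delta^*(G)=3n/4$. If $T=(v_1,v_2,v_3,v_4)$ with each $v_i\in V_{i1}$, then $T$ spans no edge, so in any putative $K_4$-factor of $G[U_T\cup T]$ the four vertices of $T$ must lie in four distinct copies of $K_4$. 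But $|U_T\cup T|=12$ forces exactly three copies of $K_4$, a contradiction; hence $T$ has \emph{no} eight-vertex absorber at all. More generally, two vertices $x\in V_{11}$ and $y\in V_{12}$ are not reachable from one another by any $(k-1)$-set in $\Theta_{4\times 4}(n/4)$, so the simple ``swap'' absorbers you describe cannot be made to work uniformly at the exact threshold.

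This is precisely why the paper's argument is structured differently from yours. The Absorbing Lemma here is a \emph{dichotomy}: either one obtains an absorbing $K_4$-matching, or $G$ is (after possibly deleting edges) $(\Delta/6,\Delta/2)$-approximate to $\Theta_{4\times 4}(n/4)$. The second alternative is then handled by the separate extremal theorem (Theorem~\ref{thm:ext}) taken from \cite{MaSz}; your outline omits this stability/extremal split entirely. Two further points: (i) to get the dichotomy the paper uses absorbers of size $2k(k-1)=24$, built from a Reachability Lemma that connects pairs in the same part by $(k-1)$-sets \emph{or} $(2k-1)$-sets, not the $2k$-vertex absorbers you propose; (ii) your almost-tiling step appeals to Fischer's result, which assumes $\delta^*\ge 3n'/4$, but $G-A$ only satisfies $\delta^*\ge (3/4-\alpha)n'$, so you need the stronger Almost Covering Lemma (Lemma~\ref{lem:ac}), which again is a dichotomy with the extremal case.
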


Recently Keevash and Mycroft \cite{KeMy} and independently Lo and Markstr\"{o}m \cite{LoMa} proved that Fischer's conjecture is asymptotically  true, namely, $\delta^*(G)\ge \frac{k-1}{k}n + o(n)$ guarantees a $K_k$-factor for all $k\ge 3$. Very recently, Keevash and Mycroft \cite{KeMy2} improved this to an exact result. 


In this paper we give a new proof of Theorems~\ref{thm:MM} and \ref{thm:MS} by the absorbing method.  Our approach is similar to that of \cite{LoMa} (in contrast, a geometric approach was employed in \cite{KeMy}). However, in order to prove exact results by the absorbing lemma, one needs only assume $\delta^*(G)\ge (1 - 1/k)n$, instead of  $\delta^*(G)\ge (1 - 1/k + \a)n$ for some $\a > 0$ as in \cite{LoMa}. In fact, our absorbing lemma uses an even weaker assumption $\delta^*(G)\ge (1 - 1/k - \a)n$ and has a more complicated absorbing structure.

The absorbing method, initiated by R\"odl, Ruci\'nski, and Szemer\'edi \cite{RRS-di}, has been shown to be effective handling extremal problems in graphs and hypergraphs. One example is the re-proof of Posa's conjecture by Levitt, S\'ark\"ozy, and Szemer\'edi \cite{LSS}, while the original  proof of Koml\'os, S\'ark\"ozy, and Szemer\'edi \cite{KSS-posa} used the Regularity Lemma. Our paper is another example of replacing the regularity method with the absorbing method. Compared with the threshold $n_0$ in Theorems~\ref{thm:MM} and \ref{thm:MS} derived from the Regularity Lemma, the value of our $n_0$ is much smaller.

Before presenting our proof, let us first recall the approach used in \cite{MaMa, MaSz}. Given a $k$-partite graph $G\in \G_k(n)$ with parts $V_1, \dots, V_k$, the authors said that $G$ is $\Delta$-extremal if each $V_i$ contains a subset $A_i$ of size $\lfloor n/k \rfloor$ such that the density $d(A_i, A_j) \le \Delta$ for all $i\ne j$. Using standard but involved graph theoretic arguments, they solved the extremal case for $k=3, 4$ \cite[Theorem 3.1]{MaMa}, \cite[Theorem 2.1]{MaSz}.

\begin{theorem}
\label{thm:ext}
Let $k=3, 4$. There exists $\Delta$ and $n_0$ such that the following holds. Let $n\ge n_0$ and $G\in \G_k(n)$ be a $k$-partite graph satisfying ${\delta}^*(G)\ge (2/3)n+1$ when $k=3$ and \eqref{eq:md1} when $k=4$.  If $G$ is $\Delta$-extremal, then $G$ contains a $K_k$-factor.
 \end{theorem}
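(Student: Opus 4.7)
My plan is to exploit the near-ideal structure implied by $\Delta$-extremality together with the minimum partite degree condition, and construct the $K_k$-factor largely by hand. Writing $B_i:=V_i\setminus A_i$, so that $|A_i|=\lfloor n/k\rfloor$ and $|B_i|=\lceil(k-1)n/k\rceil$, one deduces from $d(A_i,A_j)\le\Delta$ and $\delta^*(G)\ge(k-1)n/k$ that the bipartite graphs $G[A_i,B_j]$ and $G[B_i,A_j]$ have density $\ge 1-O(\Delta)$, while $G[B_i,B_j]$ has density $\ge(k-2)/(k-1)-O(\Delta)$. The \emph{ideal} extremal graph, in which $G[A_i,A_j]$ is empty and the other bipartite graphs are complete, admits a trivial $K_k$-factor by pairing each $A_i$-vertex with a $K_{k-1}$ in $\bigcup_{j\ne i}B_j$ that uses one vertex per $B_j$; the task is to realise an analogous construction in $G$.

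Stage~1 (classification): for a small parameter $\eta=\eta(\Delta)\to 0$, call $v\in A_i$ \emph{typical} if $|N(v)\cap A_j|\le\eta n$ for every $j\ne i$, and $v\in B_i$ \emph{typical} if $|N(v)\cap A_j|\ge|A_j|-\eta n$ for every $j\ne i$. Markov's inequality applied to $e(A_i,A_j)\le\Delta(n/k)^2$ and to the dual bound $|B_i||A_j|-e(B_i,A_j)\le O(\Delta)n^2/k^2$ (derived from the minimum degree condition) shows that each $V_i$ contains at most $O(\sqrt{\Delta}\,n)$ atypical vertices. Stage~2 (absorbing atypicals): greedily cover each atypical vertex $v\in V_i$ by a $K_k$ whose other $k-1$ vertices are typical. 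This is feasible because $v$ has at least $(k-1)n/k-O(\sqrt{\Delta}\,n)$ typical neighbours in each $V_j$, and the iterated common-neighbourhoods within typicals remain of size $\Omega(n)$ at each step, thanks to the near-complete $A$-$B$ adjacencies. Arrange the choices so that, after removing these $K_k$'s and (if needed) reclassifying a bounded number of vertices between $A_i$ and $B_i$, the residual graph $G^\star$ has $|A_i^\star|=s$ and $|B_i^\star|=(k-1)s$ for a common $s=n/k-O(\sqrt{\Delta}\,n)$ and contains only typical vertices.

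Stage~3 (main factor): construct a $K_k$-factor of $G^\star$ in which every $K_k$ uses exactly one $A$-vertex, yielding, for each $j\in[k]$, $s$ copies of $K_k$ with one vertex in $A_j^\star$ and one vertex in each $B_i^\star$, $i\ne j$. Uniformly at random, partition each $B_i^\star$ into $k-1$ equal parts of size $s$ labelled by $j\in[k]\setminus\{i\}$; standard concentration shows each typical vertex retains near-maximum degree into each labelled sub-part. For each $j$ we then seek a perfect $K_{k-1}$-packing of the $(k-1)$-partite graph on the $j$-labelled parts, simultaneously matched to $A_j^\star$. For $k=3$ this is a two-step Hall/K\"onig argument: first a perfect matching in the near-complete bipartite graph between the two $j$-labelled parts, then a bipartite matching from $A_j^\star$ to these edges via the near-complete $A$-$B$ adjacencies. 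The main obstacle is the $k=4$ case, where one needs a perfect $K_3$-factor in a tripartite graph whose minimum partite degree is only $\sim (2/3)s$, precisely the threshold of Fischer's conjecture; this is overcome without circularity by exploiting the flexibility of the $A_j^\star$-assignment (one searches for $K_3$-triples admitting a distinct common $A_j^\star$-neighbour, not arbitrary $K_3$-factors) together with the random labelling, so that a direct Hall-type argument in the auxiliary bipartite graph between $A_j^\star$ and candidate $K_3$-triples completes the construction.
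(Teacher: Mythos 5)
The paper does not prove Theorem~\ref{thm:ext}; it quotes it from \cite[Theorem~3.1]{MaMa} and \cite[Theorem~2.1]{MaSz}, where the extremal case is handled by lengthy structural case analysis. Your proposal therefore re-proves a result the paper treats as a black box, and it would have to stand on its own.

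As written it does not, because Stage~3 for $k=4$ is exactly where the real difficulty lives and it is treated with a wave of the hand. After the random labelling, a typical vertex of $B_i^\star$ has about $\tfrac{k-2}{k-1}s$ neighbours in each $j$-labelled sub-part, i.e.\ $\approx\tfrac{2}{3}s$ when $k=4$. You correctly observe this sits at the Fischer threshold, but then assert that the auxiliary freedom of choosing which $A_j^\star$-vertex tops each triangle dissolves the problem ``by a direct Hall-type argument.'' That is precisely the content of the extremal step in \cite{MaSz}, and it is not a direct Hall argument: since $\Delta$-extremality only constrains the densities $d(A_i,A_j)$, the bipartite graphs $G[B_i,B_j]$ can themselves be close to extremal (e.g.\ a near-copy of $\Theta_{3\times3}$ inside the $B$'s), and random labelling does not destroy that structure — it reproduces it at a smaller scale. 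A Hall-type condition in your auxiliary bipartite graph can fail for a set of $A_j^\star$-vertices whose neighbourhoods concentrate in the ``wrong'' sub-blocks; ruling that out requires the kind of careful sub-block bookkeeping that fills the original papers. Two further omissions: you never use the hypothesis $\delta^*(G)\ge\tfrac{2}{3}n+1$ for $k=3$, which is essential to exclude the exceptional graph $\Gamma_3(n/3)$, so your argument as stated would ``prove'' a false statement; and in Stage~2 the reclassification needed to force $|A_i^\star|=s$ and $|B_i^\star|=(k-1)s$ simultaneously for all $i$ involves parity and divisibility constraints (and reclassified vertices may become atypical with respect to the new partition) that are nontrivial and are, again, a substantial part of the original arguments.
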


To handle the non-extremal case, they proved the following lemma  (\cite[Lemma 2.2]{MaMa} and \cite[Lemma 2.2]{MaSz}).
\begin{lemma}[Almost Covering Lemma]
\label{lem:ac}
Let $k=3, 4$. Given $\Delta>0$, there exists $\alpha>0$ such that for every graph $G\in \G_k(n)$ with
$\delta^*(G)\ge (1 - 1/k)n - \alpha n$ either $G$ contains an almost $K_k$-factor that leaves at most $C = C(k)$ vertices uncovered or $G$ is $\Delta$-extremal.
\end{lemma}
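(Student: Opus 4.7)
The plan is to apply the multipartite Szemer\'edi Regularity Lemma to $G$, reduce the covering problem to a structural dichotomy in the reduced $k$-partite graph $R$, and then either find the almost $K_k$-factor tile-by-tile using the regular pairs, or lift an extremal structure in $R$ back to $G$.

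\emph{Step 1 (Regularity).} Choose constants $1/n_0 \ll \epsilon \ll d \ll \alpha \ll \Delta, 1/k$. Apply the multipartite Regularity Lemma so that each part $V_i$ is partitioned into an exceptional set $V_i^0$ of size at most $\epsilon n$ and clusters $V_i^1,\dots,V_i^t$ of common size $m$, with at most $\epsilon t^2$ irregular pairs between any two parts. Let $R$ be the reduced $k$-partite graph on the clusters in which $V_i^s V_j^{s'}$ is an edge iff the pair is $\epsilon$-regular with density at least $d$. A routine count of how many of a vertex's neighbors can lie in irregular or sparse pairs yields
\[
\delta^*(R) \ge \left(1 - \tfrac{1}{k} - \alpha - d - 2\epsilon\right)t.
\]

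\emph{Step 2 (Dichotomy on $R$).} I would show that either $R$ contains a $K_k$-matching covering all but $\eta t$ clusters in each part (with $\eta \to 0$ as $\alpha,\epsilon,d \to 0$), or else $R$ admits disjoint subsets $B_i \subseteq V_i(R)$ of size $\lfloor t/k\rfloor$ with induced density $d_R(B_i,B_j) \le \Delta/2$ for all $i\ne j$. This dichotomy is the heart of the argument and is essentially an approximate (fractional) version of the exact Fischer-type statement for $k=3,4$. The standard route is to take a largest $K_k$-matching $M$ in $R$, assume $V(R)\setminus V(M)$ is too large, and either perform an augmenting switch along a short alternating sequence of $K_k$-tiles, or extract a Hall-type obstruction that directly exhibits the sets $B_i$.

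\emph{Step 3 (Lifting to $G$).} If $R$ has the extremal structure, set $A_i := \bigcup_{V_i^s \in B_i} V_i^s$; then $|A_i| = \lfloor t/k\rfloor m \approx n/k$ and, since each cross-pair of clusters in $B_i,B_j$ is either irregular, of density $<d$, or absent in $R$, standard estimates give $d_G(A_i,A_j)\le d + O(\epsilon) + \Delta/2 \le \Delta$, so $G$ is $\Delta$-extremal. Otherwise, for each $K_k$-tile of $R$ we use a greedy embedding in the $k$ pairwise regular clusters to pack $K_k$'s covering all but $O(\sqrt{\epsilon}\, m)$ vertices of the tile, leaving an uncovered set $U$ of size $o(n)$. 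To shrink $|U|$ from $o(n)$ to a constant $C(k)$, exploit the slack in the partite degree: for each $u\in U$, the hypothesis $\delta^*(G)\ge (1-1/k-\alpha)n$ forces many $K_k$'s through $u$ that meet the current packing in exactly one tile, enabling an augmenting rotation that removes $u$ from $U$ at the cost of reshuffling one tile. Iterating yields either $|U|\le C(k)$, or a persistent local obstruction which, aggregated over the parts, again produces sets $B_i$ witnessing $\Delta$-extremality.

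\emph{Main obstacle.} The hardest part is Step 2. Since $\delta^*(R)\ge (1-1/k)t$ alone does not force a $K_k$-factor in $R$ (the examples $\Gamma_3(t/3)$ for $k=3$ being prototypical), the argument must carefully quantify how near-tight partite degree either yields a near-perfect $K_k$-matching or forces the reduced graph to resemble an extremal blowup; the case analysis of the failures of augmentation, especially for $k=4$ where a $K_4$ has richer internal structure, constitutes the bulk of the work and mirrors the detailed combinatorial analyses in \cite{MaMa, MaSz}.
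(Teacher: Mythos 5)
The paper does not prove Lemma~\ref{lem:ac}; it is imported verbatim from \cite[Lemma~2.2]{MaMa} (for $k=3$) and \cite[Lemma~2.2]{MaSz} (for $k=4$). Indeed, the paper explicitly notes that in those works the Regularity Lemma and Blow-up Lemma enter only \emph{after} the Almost Covering Lemma, to upgrade the almost factor to a perfect one, so the cited proof of Lemma~\ref{lem:ac} itself is a direct maximum-matching/augmentation argument rather than a regularity argument. There is therefore no in-paper proof to compare against, but your proposal has a concrete gap worth flagging.

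The core difficulty is the constant $C(k)$. Regularity leaves an exceptional set of size $\Theta(\epsilon n)$ per part, and even a perfect $K_k$-factor of the reduced graph $R$ yields only a tiling of $G$ that misses $\Theta(\epsilon n)$ vertices. You acknowledge this and propose ``augmenting rotations'' in Step~3 to shrink the leftover from $o(n)$ to $C(k)$, but this step is exactly the combinatorial content of the lemma. If a rotation argument under $\delta^*(G)\ge(1-1/k-\alpha)n$ could drive the leftover from $o(n)$ down to a constant, the same argument run from scratch (starting from a maximum $K_k$-matching, leftover $\le n$) would do the job without regularity, rendering Steps~1--2 superfluous. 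Moreover Step~2's dichotomy on $R$ (with $\delta^*(R)\ge(1-1/k-o(1))t$ and $t\to\infty$) is formally the same statement as the lemma itself with $n$ replaced by $t$; you concede it is ``essentially an approximate version of the exact Fischer-type statement'' and ``the bulk of the work.'' So both the structural dichotomy and the constant-leftover upgrade are deferred rather than proved, and the one idea you do add --- the regularity reduction --- neither removes the difficulty nor, on its own, can deliver the constant $C(k)$ conclusion. What is actually needed (and what Magyar--Martin and Martin--Szemer\'edi supply) is a direct analysis of a maximum $K_k$-matching: if more than $C(k)$ vertices are uncovered, either an alternating/augmenting exchange enlarges the matching, or the failure of all such exchanges forces sparse sets $A_i\subset V_i$ of size $\lfloor n/k\rfloor$ with $d(A_i,A_j)\le\Delta$, i.e.\ $\Delta$-extremality. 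That case analysis, especially for $k=4$, is what would have to be written out here.
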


To improve the almost $K_k$-factor obtained from Lemma~\ref{lem:ac}, they used the Regularity Lemma and Blow-up Lemma \cite{Blowup}. Here is where we need our absorbing lemma whose proof is given in Section 2. Our lemma actually gives a more detailed structure than what is needed for the extremal case when $G$ does not satisfy the absorbing property.

We need some definitions. Given positive integers $k$ and $r$, let $\Theta_{k \times r}$ denote the graph with vertices $a_{ij}$, $i=1, \dots, k$, $j=1, \dots, r$, and $a_{ij}$ is adjacent to $a_{i'j'}$ if and only if $i\ne i'$ and $j\ne j'$.  In addition, given a positive integer $t$, the graph $\Theta_{k \times r}(t)$ denotes the blow-up of $\Theta_{k \times r}$, obtained by replacing vertices $a_{ij}$ with sets $A_{ij}$ of size $t$, and edges $a_{ij} a_{i'j'}$ with complete bipartite graphs between $A_{ij}$ and $A_{i'j'}$. Given $\eps, \Delta > 0$ and $t\ge 1$ (not necessarily an integer), we say that a $k$-partite graph $G$ is $(\eps, \Delta)$-approximate to $\Theta_{k\times r}(t)$ if each of its partition sets $V_i$ can be partitioned into $\bigcup_{i=1}^r V_{ij}$ such that $||V_{ij}| - t|\le \eps t$ for all $i$, $j$ and $d(V_{ij}, V_{i'j})\le \Delta$ whenever $i\ne i'$.\footnote{Here we follow the definition of $(\eps, \Delta)$-approximation in \cite{MaMa,MaSz}. It seems natural to require that $d(V_{ij}, V_{i'j'})\ge 1- \Delta$ whenever $i\ne i'$ and $j\ne j'$ as well. However, this follows from $d(V_{ij}, V_{i'j})\le \Delta$ ($i\ne i'$) when $\delta^*(G)\ge (1 - 1/r)rt$.}

\begin{lemma}[Absorbing Lemma]
\label{lem:ab}
Given $k\ge 3$ and $\Delta>0$, there exists $\alpha = \alpha(k, \Delta)>0$ and an integer $n_1>0$ such that the following holds.  Let $n\ge n_1$ and $G\in \G_k(n)$ be a $k$-partite graph on $V_1\cup \dots \cup V_k$ such that $\delta^*(G)\ge (1 - 1/k)n - \alpha n$. Then one of the following cases holds.
\begin{enumerate}
\item $G$ contains a $K_k$-matching $M$ of size $|M|\le 2(k-1)\alpha^{4k-2}n $ in $G$ such that for every $W\subset V\backslash V(M)$ with $|W\cap V_1| = \dots = |W\cap V_k| \le \alpha^{8k-6}n/4$, there exists a $K_k$-matching covering exactly the vertices in $V(M)\cup W$.
\item We may remove some edges from $G$ so that the resulting graph $G'$ satisfies $\delta^*(G')\ge (1 - 1/k)n - \alpha n$ and is $(\Delta/6, \Delta/2)$-approximate to $\Theta_{k\times k}(\tfrac nk)$.
\end{enumerate}

\end{lemma}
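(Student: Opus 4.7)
My plan is to follow the absorbing method directly. For a balanced $k$-tuple $W=(w_1,\dots,w_k)$ with $w_i\in V_i$, call a disjoint union $A=T_1\cup\cdots\cup T_k$ an \emph{absorber} for $W$, where $T_j=\{x_i^j:1\le i\le k\}$ with $x_i^j\in V_i$, provided (i) each $T_j$ spans a $K_k$, (ii) the diagonal $D:=\{x_j^j:1\le j\le k\}$ spans a $K_k$, and (iii) for every $j$ the vertex $w_j$ is adjacent to every $x_i^j$ with $i\ne j$. Such an $A$ has $k^2$ vertices, exactly $k$ per part; it is tiled by $\{T_1,\dots,T_k\}$, while $A\cup W$ is tiled by the $k+1$ cliques $\{D\}\cup\bigl\{(T_j\setminus\{x_j^j\})\cup\{w_j\}:1\le j\le k\bigr\}$. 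Hence $A$ genuinely absorbs $W$, and the task reduces to (a) producing enough absorbers per $k$-tuple, and (b) turning them into a single universal absorbing $K_k$-matching $M$.

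The core of the proof is a dichotomy: for every balanced $k$-tuple $W$, either there are at least $\beta n^{k^2}$ absorbers for $W$ (for some $\beta=\beta(k,\Delta)>0$), or conclusion~(2) holds. The counting is a chain of common-neighborhood estimates driven by $\delta^*(G)\ge(1-1/k-\alpha)n$: any two vertices in distinct parts miss only $2(1/k+\alpha)n$ neighbors in any third part, any three miss only $3(1/k+\alpha)n$, and so on, so picking the diagonal $D$ as a transversal $K_k$ yields $\Omega(n^k)$ choices, and for each $j$ picking the $k-1$ vertices $\{x_i^j:i\ne j\}$ as a $K_{k-1}$ inside the joint common neighborhood of $w_j$ and $x_j^j$ yields an additional $\Omega(n^{k-1})$ factor. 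Multiplying gives the required $\Omega(n^{k^2})$ absorbers unless some common neighborhood along the way is abnormally small; in that case one iteratively extracts from each $V_i$ a partition $V_i=V_{i,1}\cup\cdots\cup V_{i,k}$ with $||V_{i,j}|-n/k|\le(\Delta/6)(n/k)$ and $d(V_{i,j},V_{i',j})<\Delta/2$ for all $i\ne i'$. Deleting the few ``same-column'' edges produces the subgraph $G'$ required by conclusion~(2) while keeping $\delta^*(G')\ge(1-1/k)n-\alpha n$, since each vertex loses at most $O(\Delta n/k)$ edges which can be absorbed into the slack $\alpha n$ when $\alpha$ is chosen large enough relative to $\Delta$.

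In the non-structural case I construct $M$ via the probabilistic argument of R\"odl--Ruci\'nski--Szemer\'edi: include each $k^2$-set that is an absorber for \emph{some} $W$ independently with probability $p\sim\alpha^{4k-2}n^{1-k^2}$, then delete one member of each pair of chosen absorbers that intersect, and of each pair that shares a vertex with a fixed $W$. Standard Chernoff and first-moment estimates yield a pairwise vertex-disjoint family $\mathcal{F}$ with $|\mathcal{F}|\le \frac{2(k-1)}{k}\alpha^{4k-2}n$ such that every balanced $k$-tuple is absorbed by at least $\alpha^{8k-6}n/4$ members of $\mathcal{F}$. Tiling each absorber by its $k$ internal copies of $K_k$ produces the claimed $K_k$-matching $M$ of size $\le 2(k-1)\alpha^{4k-2}n$. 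Given any $W\subset V\setminus V(M)$ with $|W\cap V_i|\le\alpha^{8k-6}n/4$, partition $W$ into at most $\alpha^{8k-6}n/4$ balanced $k$-tuples and absorb them greedily using distinct members of $\mathcal{F}$, which is feasible since the absorption budget per tuple exceeds the number of tuples used so far. The main obstacle is the second step above: converting the potentially diverse ways in which the absorber count for a single $W^*$ can be deficient into the uniform global partition of each part into $k$ nearly-equal blocks with the precise density bound $\Delta/2$ and imbalance bound $\Delta/6$ demanded by conclusion~(2); this is where the detailed graph-theoretic work lies, and will require careful bookkeeping of the minimum-degree slack $\alpha n$ throughout the iteration that produces the partition.
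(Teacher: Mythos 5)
Your $k^2$-vertex absorber is a valid and actually rather elegant structure: I verified that $A=T_1\cup\cdots\cup T_k$ with the diagonal $D=\{x_j^j\}$ a $K_k$, each $T_j$ a $K_k$, and $w_j\sim x_i^j$ for $i\ne j$ really does tile both $A$ (by the $T_j$) and $A\cup W$ (by $D$ together with $(T_j\setminus\{x_j^j\})\cup\{w_j\}$). The R\"odl--Ruci\'nski--Szemer\'edi probabilistic extraction is standard and matches the paper's. However, essentially all of the paper's work lies in establishing the dichotomy you only assert, and you are candid that this is ``the main obstacle.'' Two concrete issues with your plan for that dichotomy.

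First, the failure mode is mischaracterized. You write that the counting gives $\Omega(n^{k^2})$ absorbers ``unless some common neighborhood along the way is abnormally small.'' But for $k\ge 3$ the degree condition $\delta^*(G)\ge(1-1/k-\alpha)n$ already forces $|N(w_j)\cap N(x_j^j)\cap V_i|\ge(1-2/k-2\alpha)n>0$; these common neighborhoods are \emph{never} small. What can fail is that a large common neighborhood contains essentially no copies of $K_{k-1}$. In $\Theta_{k\times k}(n/k)$, if $w_j$ and $x_j^j$ lie in different column blocks of $V_j$, the common neighborhood in each $V_i$ spans only $k-2$ of the $k$ column blocks, so a rainbow $K_{k-1}$ (over $k-1$ rows) cannot exist even though the set itself is large. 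Detecting and exploiting this sparsity requires a structural lemma of the type proved in the paper as Lemma~\ref{lem3}: a dense $(k-1)$-partite graph with few $K_{k-1}$'s is approximately $\Theta_{(k-1)\times(k-2)}$. A bare degree count will not produce the partition you want.

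Second, your one-step $k^2$-absorber imposes two constraints simultaneously that the paper deliberately decouples: every $w_j$ must be joined to $x_j^j$ by a $(k-1)$-set lying in their common neighborhood, \emph{and} the diagonal $\{x_1^1,\dots,x_k^k\}$ must span a $K_k$. The paper's Reachability Lemma (Lemma~\ref{lem:reach}) establishes that, in the non-extremal case, two same-part vertices can be connected by $(k-1)$-sets \emph{or} by $(2k-1)$-sets, and its $2k(k-1)$-vertex absorbing set is sized precisely so that the longer two-step connections can be used. Moreover the paper first picks the initial $K_k$ to contain $v_1$ by plain degree counting (so it certainly exists), and only then applies reachability for $i\ge 2$, with no constraint tying the connecting sets to one another. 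In your construction the set of $x_j^j$ that are $(k-1)$-reachable from $w_j$ might only have size $\Theta(\alpha n)$, and with parts that small the degree condition no longer forces a transversal $K_k$ among them. So even granting the structural machinery, it is not clear your dichotomy as stated is correct; you would either need to prove that $(k-1)$-set reachability alone, with the compatibility of the diagonal, suffices in all non-extremal graphs, or enlarge the absorber to accommodate two-step connections as the paper does.
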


The $K_k$-matching $M$ in Lemma~\ref{lem:ab} has the so-called \emph{absorbing}  property: it can absorb \emph{any} balanced set with a much smaller size.

\begin{proof}[Proof of Theorems~\ref{thm:MM} and \ref{thm:MS}]
Let $k= 3, 4$. Let $\alpha \ll \Delta$, where $\Delta$ is given by Theorem~\ref{thm:ext} and $\alpha$ satisfies both Lemmas~\ref{lem:ac} and \ref{lem:ab}. Suppose that $n$ is sufficiently large. Let $G\in \G_k(n)$ be a $k$-partite graph satisfying ${\delta}^*(G)\ge (2/3)n+1$ when $k=3$ and \eqref{eq:md1} when $k=4$.  By Lemma~\ref{lem:ab}, either $G$ contains a subgraph which is $({\Delta}/{6}, {\Delta}/{2})$-approximate to $\Theta_{k\times k}(\tfrac n k)$ or $G$ contains an absorbing $K_k$-matching $M$. In the former case, for $i=1, \dots, k$, we add or remove at most $\frac{\Delta n}{6k}$ vertices from $V_{i1}$ to obtain a set $A_i\subset V_i$ of size $\lfloor n/k \rfloor$. For $i\ne i'$, we have
\begin{align*}
e(A_i, A_{i'}) & \le e(V_{i1}, V_{i'1}) + \frac{\Delta n}{6k} (|A_i| + |A_{i'}|) \\
& \le \frac{\Delta}{2} |V_{i1}| |V_{i'1}| + 2 \frac{\Delta n}{6k} \left\lfloor \frac{n}{k} \right\rfloor \\
& \le \frac{\Delta}{2}  \left(1 + \frac{\Delta}{6} \right)^2 \left(\frac{n}{k} \right)^2 +  \frac{\Delta n}{3 k} \left\lfloor \frac{n}{k} \right\rfloor  \\
& \le \Delta \left\lfloor \frac{n}{k} \right\rfloor \left\lfloor \frac{n}{k} \right\rfloor,
\end{align*}
which implies that $d(A_i, A_{i'})\le \Delta$. Thus $G$ is $\Delta$-extremal. By Theorem~\ref{thm:ext}, $G$ contains a $K_k$-factor.  In the latter case, $G$ contains a $K_k$-matching $M$ is of size $|M|\le 2(k-1)\alpha^{4k-2}n $ such that for every $W\subset V\backslash V(M)$ with $|W\cap V_1| = \dots = |W\cap V_k| \le  \alpha^{8k-6}n/4$, there exists a $K_k$-matching on $V(M)\cup W$.  Let $G' = G\setminus V(M)$ be the induced subgraph of $G$ on $V(G)\setminus V(M)$, and $n' = |V(G')|$. Clearly $G'$ is balanced. As $\alpha \ll 1$, we have
\[
\delta^*(G')\ge \delta^*(G) - |M| \ge \left(1 - \frac{1}{k} \right)n - 2(k-1)\alpha^{4k-2}n \ge \left(1- \frac{1}{k} - \alpha \right) n'.
\]
By Lemma~\ref{lem:ac}, $G'$ contains a $K_k$-matching $M'$ such that $|V(G')\setminus V(M')| \le C$. Let $W= V(G')\setminus V(M')$. Clearly $|W\cap V_1| = \dots = |W\cap V_k|$. Since $C/k\le \a^{8k-6}n/4$ for sufficiently large $n$, by the absorbing property of $M$,  there is a $K_k$-matching $M''$ on $V(M)\cup W$. This gives the desired $K_k$-factor $M'\cup M''$ of $G$.
\end{proof}

\noindent \textbf{Remarks.}

\begin{itemize}
    \item Since our Lemma~\ref{lem:ab} works for all $k\ge 3$, it has the potential of proving a general multipartite Hajnal-Szemer\'edi theorem. To do it, one only needs to prove Theorem~\ref{thm:ext} and Lemma~\ref{lem:ac} for $k\ge 5$.

    \item Since our Lemma~\ref{lem:ab} gives a detailed structure of $G$ when $G$ does not have desired absorbing $K_k$-matching, it has the potential of simplifying the proof of the extremal case. Indeed, if one can refine Lemma~\ref{lem:ac} such that it concludes that $G$ either contains an almost $K_k$-factor or it is approximate to $\Theta_{k\times k}(\tfrac nk)$ and other extremal graphs, then in Theorem~\ref{thm:ext} we may assume that $G$ is actually approximate to these extremal graphs.

    \item Using the Regularity Lemma, researchers have obtained results on packing arbitrary graphs in $k$-partite graphs, see \cite{Zh, HlSc, CzDe, BuZh} for $k=2$ and \cite{MaZh} for $k=3$. With the help of the recent result of Keevash--Mycroft \cite{KeMy} and Lo-Markstr\"om \cite{LoMa}, it seems not very difficult to extend these results to the $k\ge 4$ case (though exact results may be much harder).  However,  it seems difficult to replace the regularity method by the absorbing method for these problems.

\end{itemize}

\section{Proof of the Absorbing Lemma}

In this section we prove the Absorbing Lemma (Lemma~\ref{lem:ab}). We first introduce the concepts of reachability.

\begin{definition}
In a graph $G$, a vertex $x$ is reachable from another vertex $y$ by a set $S\subseteq V(G)$ if both $G[x\cup S]$ and $G[y\cup S]$ contain $K_k$-factors. In this case, we say $S$ connects $x$ and $y$.
\end{definition}

The following lemma plays a key role in constructing absorbing structures. We postpone its proof to the end of the section.

\begin{lemma}[Reachability Lemma]
\label{lem:reach}
Given $k\ge 3$ and $\Delta>0$, there exists $\alpha = \alpha(k, \Delta)>0$ and an integer $n_2>0$ such that the following holds.  Let $n\ge n_2$ and $G\in \G_k(n)$ be a $k$-partite graph on $V_1\cup \dots \cup V_k$ such that $\delta^*(G)\ge (1 - 1/k)n - \alpha n$. Then one of the following cases holds.
\begin{enumerate}
\item For any $x$ and $y$ in $V_i$, $i\in [k]$, $x$ is reachable from $y$ by either at least $\alpha^3 n^{k-1}$ $(k-1)$-sets or at least $\alpha^3 n^{2k-1}$ $(2k-1)$-sets in $G$.
\item We may remove some edges from $G$ so that the resulting graph $G'$ satisfies $\delta^*(G')\ge (1 - 1/k)n - \alpha n$ and is $(\Delta/6, \Delta/2)$-approximate to $\Theta_{k\times k}(\tfrac nk)$.
\end{enumerate}
\end{lemma}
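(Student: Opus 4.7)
The plan is to prove the contrapositive of case (1). Assume there exist some $i \in [k]$ (say $i = 1$) and $x, y \in V_1$ such that $(x, y)$ has fewer than $\alpha^3 n^{k-1}$ $(k-1)$-connectors and fewer than $\alpha^3 n^{2k-1}$ $(2k-1)$-connectors; I will construct a subgraph $G'$ of $G$ (by deleting edges) that is $(\Delta/6, \Delta/2)$-approximate to $\Theta_{k\times k}(n/k)$ and still satisfies $\delta^*(G') \ge (1 - 1/k)n - \alpha n$.

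First, I translate the two hypotheses. Writing $L_j := N(x) \cap N(y) \cap V_j$ for $j \ge 2$, the minimum partite degree gives $|L_j| \ge (1 - 2/k - 2\alpha)n$, and a $(k-1)$-connector for $(x, y)$ is exactly a copy of $K_{k-1}$ with one vertex in each $L_j$. Hence the first hypothesis says the $(k-1)$-partite graph on $L_2 \cup \cdots \cup L_k$ has fewer than $\alpha^3 n^{k-1}$ copies of $K_{k-1}$. The second hypothesis I will use via a composition observation: for any $z \in V_1 \setminus \{x, y\}$, a $(k-1)$-connector $T_x$ for $(x, z)$ together with a vertex-disjoint $(k-1)$-connector $T_y$ for $(z, y)$ yields a $(2k-1)$-connector $\{z\} \cup T_x \cup T_y$ for $(x, y)$, since $\{x\} \cup T_x,\ \{z\} \cup T_y$ and $\{z\} \cup T_x,\ \{y\} \cup T_y$ are two decompositions into copies of $K_k$. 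The upper bound on $(2k-1)$-connectors therefore forces most $z \in V_1$ to be deficient in $(k-1)$-connectors with $x$ or with $y$.

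Next, I extract the column structure. For each $j \ge 2$, I refine $V_j$ according to the pair $(\mathbf{1}[v \in N(x)], \mathbf{1}[v \in N(y)])$; in the $\Theta_{k \times k}$ extremal graph these bits isolate $V_{j, \mathrm{col}(x)}$, $V_{j, \mathrm{col}(y)}$ and $L_j$, with the fourth cell empty. Every $v \in L_j$ has at least $(1 - 3/k - 3\alpha)n$ neighbors in each $L_{j'}$ by inclusion--exclusion, so a stability/supersaturation argument using the sparse $K_{k-1}$-count forces $L_j$ to decompose into $k - 2$ pieces $L_j^3, \ldots, L_j^k$ with only $O(\alpha n^2)$ edges between same-indexed pieces across different rows. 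This yields a $k$-column partition $V_j = V_{j1} \cup \cdots \cup V_{jk}$ for each $j \ge 2$ whose densities nearly match the Theta pattern.

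Finally, I lift the column partition to $V_1$ by rerunning the analysis starting from a well-chosen pair $(x', y')$ in some $V_j$, $j \ge 2$, and align column labels across rows using the composition constraint from the first step. Removing the misaligned same-column cross-edges produces $G'$; at most $O(\alpha n)$ edges are deleted at each vertex, so $\delta^*(G') \ge (1 - 1/k)n - \alpha n$ persists (after adjusting the constant $\alpha$), and the block sizes $|V_{ij}|$ lie within $(\Delta/6)(n/k)$ of $n/k$ because $\alpha \ll \Delta$. The main obstacle I expect is the structural extraction inside $L$: the minimum cross-degree there is barely above the Tur\'an-type threshold for $K_{k-1}$-freeness, so translating ``few $K_{k-1}$'s'' into ``a clean column decomposition'' probably requires either an induction on $k$ (invoking a weaker reachability statement at level $k-1$) or a careful dependent random choice to localize the deficient configurations before one can bound the misaligned edges.
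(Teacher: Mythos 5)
Your overall decomposition is close to the paper's in spirit --- both start from the partition of each $V_j$ ($j\ge 2$) induced by membership in $N(x)$, $N(y)$, both, or neither, both compose two disjoint $(k-1)$-connectors through an intermediate $z\in V_1$ to get $(2k-1)$-connectors, and both aim to extract a $\Theta_{(k-1)\times(k-2)}$ column structure from the common neighborhood $L=\bigcup_j L_j$ (the paper's $B$) using the sparse-$K_{k-1}$ hypothesis. You correctly identify that extracting the columns of $L$ needs a separate inductive lemma; the paper isolates this as Lemma~\ref{lem3} and proves it by induction on $k$. But two load-bearing parts of the argument are missing from your plan.

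First, the proposed lift to $V_1$ by ``rerunning the analysis from a pair $(x',y')$ in some $V_j$'' does not work as stated: the hypothesis that $(x,y)$ is connector-deficient does not transfer to an arbitrary pair in a different part, so you have no starting deficiency to rerun from. The paper instead analyzes $V_1$ directly: it isolates a small exceptional set $V_1'$ using the $(2k-1)$-connector bound, and shows (via counting $K_{k-1}$'s in $N(xv)$ and $N(yv)$) that every $v\in V_1\setminus V_1'$ avoids essentially one full column $A_{i\ell}$ in every row $i\ge 2$, with the avoided column index $\ell$ independent of $i$; the $A_{1j}$ are then defined as the resulting fibres. That mechanism is what gives the $V_1$-columns, and nothing in your outline substitutes for it.

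Second, and more seriously, your plan does not recognize the obstacle that the reachability hypotheses alone do \emph{not} make the outer columns $A_{\cdot 1}$ and $A_{\cdot k}$ sparse. What one can get from ``few $(2k-1)$-connectors'' is only the dichotomy of the paper's Claim~\ref{clm:row1k1}: for each pair of rows $i\neq i'$, \emph{at least one} of $d(A_{i1},A_{i'1})$, $d(A_{ik},A_{i'k})$ is small, but both could fail simultaneously in different ways for different pairs. To force both densities to be small (Claim~\ref{clm:row1k2}) the paper must pass to an edge-minimal $G$ satisfying the degree condition and show that a dense outer column would yield vertices whose degree strictly exceeds $\delta^*$, so that an edge could be deleted, contradicting minimality. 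This is exactly what licenses ``we may remove some edges'' in case (2) of the lemma. Your final step, ``remove the misaligned same-column cross-edges and note at most $O(\alpha n)$ deletions per vertex,'' assumes the column structure is already clean and therefore skips the step that actually produces it; without the minimality argument, the bound on same-column densities simply is not available, and the approximation to $\Theta_{k\times k}(n/k)$ cannot be concluded.
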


With the aid of Lemma \ref{lem:reach}, the proof of Lemma \ref{lem:ab} becomes standard counting and probabilistic arguments, as shown in \cite{HPS}.
\begin{proof}
[Proof of Lemma \ref{lem:ab}]
We assume that $G$ does not satisfy the second property stated in the lemma.

Given a \emph{crossing} $k$-tuple $T=(v_1,\cdots,v_k)$, with $v_i\in V_i$, for $i=1,\cdots,k$, we call a set $A$ an \emph{absorbing set} for $T$ if both $G[A]$ and $G[A\cup T]$ contain $K_k$-factors. Let $\mathcal{L}(T)$ denote the family of all $2k(k-1)$-sets that absorb $T$ (the reason why our absorbing sets are of size $2k(k-1)$ can be seen from the proof of Claim~\ref{clm:abT} below).


\begin{claim}\label{clm:abT}
For every crossing $k$-tuple T, we have $|\mathcal{L}(T)|>\alpha^{4k-3}n^{2k(k-1)}$.
\end{claim}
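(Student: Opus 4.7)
The plan is to exhibit an explicit construction of $2k(k-1)$-sized absorbing sets for $T = (v_1, \ldots, v_k)$ and count them. Since we are inside the proof of Lemma~\ref{lem:ab} under the standing assumption that its alternative~(2) fails, Lemma~\ref{lem:reach} yields its alternative~(1): for any pair of vertices in the same part, there exist either at least $\alpha^3 n^{k-1}$ $(k-1)$-connectors or at least $\alpha^3 n^{2k-1}$ $(2k-1)$-connectors between them.

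I will build $A$ in three vertex-disjoint layers. First, a \emph{backbone} crossing $K_k$-copy $U = \{u_1, \ldots, u_k\}$ with $u_i \in V_i$, chosen disjoint from $T$. Second, for each $i \in [k]$, a $(k-1)$-connector $S_i$ for the pair $(u_i, v_i)$, i.e.\ a $(k-1)$-set such that both $S_i \cup \{u_i\}$ and $S_i \cup \{v_i\}$ span $K_k$. Third, $k-2$ additional vertex-disjoint crossing $K_k$-copies $P_1, \ldots, P_{k-2}$. All layers are pairwise disjoint and disjoint from $T$. Setting $A := U \cup \bigcup_{i=1}^{k} S_i \cup \bigcup_{j=1}^{k-2} P_j$ gives $|A| = k + k(k-1) + (k-2)k = 2k(k-1)$. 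Then $G[A]$ admits the $K_k$-factor $\{S_i \cup \{u_i\}\}_{i=1}^{k} \cup \{P_j\}_{j=1}^{k-2}$ ($2k-2$ copies of $K_k$), while $G[A \cup T]$ admits the $K_k$-factor $\{S_i \cup \{v_i\}\}_{i=1}^{k} \cup \{U\} \cup \{P_j\}_{j=1}^{k-2}$ ($2k-1$ copies), so $A$ absorbs $T$.

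For the count, the degree hypothesis $\delta^*(G) \ge (1 - 1/k - \alpha)n$ gives $\ge c_k n^k$ crossing $K_k$-copies for $U$ and for each $P_j$ by a standard greedy argument; Lemma~\ref{lem:reach} provides $\ge \alpha^3 n^{k-1}$ choices for each $S_i$ (enforcing disjointness from the $O_k(1)$ previously chosen vertices costs only a lower-order term). Multiplying these counts and dividing by the $O_k(1)$ over-counting factor (each $A$ admits boundedly many ordered decompositions into the three-layer structure), we obtain
\[
|\mathcal{L}(T)| \;\ge\; C_k\, \alpha^{3k}\, n^{\,k + k(k-1) + (k-2)k} \;=\; C_k\, \alpha^{3k}\, n^{2k(k-1)},
\]
which exceeds $\alpha^{4k-3} n^{2k(k-1)}$ once $\alpha$ is small enough, using $3k \le 4k - 3$ for all $k \ge 3$.

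The main obstacle is the either/or in Lemma~\ref{lem:reach}: when only $(2k-1)$-connectors are abundant for some pair $(u_i, v_i)$, one has $|S_i| = 2k-1$ and the naive $|A|$ exceeds $2k(k-1)$. The remedy uses that a $(2k-1)$-connector $S_i$ induces two copies of $K_k$ in each of $G[S_i \cup \{u_i\}]$ and $G[S_i \cup \{v_i\}]$, effectively supplying an ``extra'' $K_k$ to both factorizations. Concretely, if $m$ of the $S_i$'s are $(2k-1)$-type, use only $k - m - 2$ padding copies $P_j$ instead of $k-2$; a short bookkeeping check shows $|A| = 2k(k-1)$ is preserved, both factorizations still exist, and the product count comes out to the same $C_k\, \alpha^{3k}\, n^{2k(k-1)}$ for every $m \in \{0, 1, \ldots, k-2\}$. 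The boundary cases $m \ge k-1$ will be handled by first restricting the choice of the backbone $U$ to the abundant subfamily of crossing $K_k$-copies whose vertices admit $(k-1)$-connectors with the corresponding $v_i$, which is possible since $|V_i| = n$ dwarfs any ``bad'' subset.
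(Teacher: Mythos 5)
Your construction is genuinely different from the paper's: you choose a separate backbone $K_k$-copy $U=\{u_1,\dots,u_k\}$ disjoint from $T$, connect all $k$ pairs $(u_i,v_i)$, and then pad with extra $K_k$-copies. The paper instead picks a $K_k$ of the form $\{v_1,u_2,\dots,u_k\}$ (so $v_1$ itself is in the backbone and only $k-1$ pairs need connecting) and, crucially, normalizes all connectors to size $2k-1$ by observing that if $S$ is a $(k-1)$-connector for $(u_i,v_i)$ then $S\cup K$ is a $(2k-1)$-connector for any disjoint $K_k$-copy $K$. With $k-1$ backbone vertices and $k-1$ connectors of size $2k-1$, the total is exactly $(k-1)+(k-1)(2k-1)=2k(k-1)$ with no case analysis.

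The gap in your write-up is the handling of the boundary cases $m\ge k-1$. Your bookkeeping, which needs $k-m-2\ge 0$ padding copies, fails there: with $m=k$ you would need $|U|=-k$ and with $m=k-1$ you would need $|U|=0$, but the backbone vertices $u_i$ are indispensable for giving $G[A]$ a $K_k$-factor (each $S_i\cup\{u_i\}$ is one of the factors), so the structure cannot shrink. Your proposed remedy---restrict the backbone to $u_i$'s that admit abundant $(k-1)$-connectors to $v_i$---is not supported by Lemma~\ref{lem:reach}. That lemma is a per-pair dichotomy: for each fixed pair it guarantees one of the two types of connector is abundant, but it says nothing about how many $u\in V_i$ fall in the $(k-1)$-abundant class. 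It is entirely consistent with the lemma that for a fixed $v_i$, \emph{every} $u\in V_i$ has only $(2k-1)$-connectors in abundance, in which case your ``good'' backbone family is empty and the count collapses. To close the gap you would need either the paper's upgrade trick together with the asymmetric backbone $\{v_1,u_2,\dots,u_k\}$, or some other device that removes the dependence on how the $(k-1)$/$(2k-1)$ dichotomy resolves across vertices.
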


\begin{proof}

Fix a crossing $k$-tuple $T$. First we try to find a copy of $K_k$ containing $v_1$ and avoiding $v_2$, $\dots, v_k$. By the minimum degree condition, there are at least
\[
\prod^k_{i=2}\left(n-1-(i-1)\left(\frac1k+\a \right)n\right) \ge \prod^k_{i=2} \left(n - (i-1)\frac n k - ((k - 1)\a n + 1)\right)
\]
such copies of $K_k$. When $n\ge 3k^2$ and $\frac1{\a} \ge 3k^2$, we have $(k-1)\a n + 1 \le n/(3k)$ and thus the number above is at least
\[
\prod^k_{i=2} \left(n - (i-1)\frac n k - \frac{n}{3k}\right) \ge \left(\frac n k\right)^{k-1}, \text{ when }k\ge 3.
\]

Fix such a copy of $K_k$ on $\{v_1,u_2,u_3,\cdots,u_k\}$. Consider $u_2$ and $v_2$.
By Lemma \ref{lem:reach} and the assumption that $G$ does not satisfy the second property of the lemma, we can find at least $\alpha^3 n^{k-1}$ $(k-1)$-sets or $\alpha^3 n^{2k-1}$ $(2k-1)$-sets to connect $u_2$ and $v_2$. If $S$ is a $(k-1)$-set that connects $u_2$ and $v_2$, then $S\cup K$ also connects $u_2$ and $v_2$ for any $k$-set $K$ such that $G[K]\cong K_k$ and $K\cap S = \emptyset$. There are at least
\[
(n-2)\prod^k_{i=2}\left(n-1-(i-1)\left(\frac1k+\a \right)n\right)\ge \frac{n}{2} \left(\frac n k\right)^{k-1}
\]
copies of $K_k$ in $G$ avoiding $u_2$, $v_2$ and $S$. If there are at least $\alpha^3 n^{k-1}$ $(k-1)$-sets that connect $u_2$ and $v_2$, then at least
\[
\alpha^3 n^{k-1} \cdot \frac{n}{2} \left(\frac n k\right)^{k-1} \frac{1}{\binom{2k-1}{k-1}}\ge 2\alpha^4 n^{2k-1}
\]
$(2k-1)$-sets connect $u_2$ and $v_2$ because a $(2k-1)$-set can be counted at most $\binom{2k-1}{k-1}$ times. Since $2\a^4 < \a^3$, we can assume that there are always at least $2\alpha^4 n^{2k-1}$ $(2k-1)$-sets connecting $u_2$ and $v_2$. We inductively choose disjoint $(2k-1)$-sets that connects $v_i$ and $u_i$ for $i=2, \dots, k$. For each $i$, we must avoid $T$, $u_2, \dots, u_k$, and $i-2$ previously selected $(2k-1)$-sets. Hence there are at least $2\alpha^4 n^{2k-1}-(2k-1)(i-1)n^{2k-2}>\alpha^4 n^{2k-1}$ choices of such $(2k-1$)-sets for each $i\ge 2$. Putting all these together, and using the assumption that $\a$ is sufficiently small, we have
$$|\mathcal{L}(T)|\ge \left(\frac n k\right)^{k-1}\cdot (\alpha^4n^{2k-1})^{k-1}>\alpha^{4k-3}n^{2k(k-1)}.$$

\end{proof}

Every set $S\in \mathcal{L}(T)$ is \emph{balanced} because $G[S]$ contains a $K_k$-factor and thus $|S\cap V_1| = \cdots = |S\cap V_k|= 2(k-1)$. Note that there are $\binom{n}{2(k-1)}^k$ balanced $2k(k-1)$-sets in $G$. Let $\mathcal{F}$ be the random family of $2k(k-1)$-sets obtained by selecting each balanced $2k(k-1)$-set from $V(G)$ independently with probability $p:= \alpha^{4k-3}n^{1-2k(k-1)}$.
Then by Chernoff's bound, since $n$ is sufficiently large, with probability $1-o(1)$, the family $\mathcal{F}$ satisfies the following properties:
\begin{align}
&|\mathcal{F}|\le 2\mathbb{E}(|\mathcal{F}|)\le 2p\binom{n}{2(k-1)}^k\le \alpha^{4k-2}n, & \label {eq:1}\\
&|\mathcal{L}(T)\cap\mathcal{F}|\ge \frac{1}2\mathbb{E}(|\mathcal{L}(T)\cap\mathcal{F}|)\ge \frac{1}2p|\mathcal{L}(T)|\ge \frac{\alpha^{8k-6}n}2 \text{ for every crossing }k\text{-tuple }T.&\label {eq:2}
\end{align}

Let $Y$ be the number of intersecting pairs of members of $\mathcal{F}$. Since each fixed balanced $2k(k-1)$-set intersects at most $2k(k-1) \binom{n-1}{2(k-1)-1} \binom{n}{2(k-1)}^{k-1}$ other balanced $2k(k-1)$-sets in $G$,
\[
\mathbb{E}(Y)\le p^2\binom{n}{2(k-1)}^k 2k(k-1) \binom{n-1}{2k-3} \binom{n}{2(k-1)}^{k-1}\le \frac18\alpha^{8k-6}n.
\]
By Markov's bound, with probability at least $\frac{1}{2}$, $Y\le \alpha^{8k-6}n/4$.
Therefore, we can find a family $\mathcal{F}$ satisfying (\ref{eq:1}), (\ref{eq:2}) and having at most $\alpha^{8k-6}n/4$
intersecting pairs. Remove one set from each of the intersecting pairs and the sets that have no $K_k$-factor from $\mathcal{F}$, we get a subfamily $\mathcal{F}'$ consisting of pairwise disjoint absorbing $2k(k-1)$-sets which satisfies $|\mathcal{F}'|\le |\mathcal{F}|\le \alpha^{4k-2}n$ and for all crossing $T$,
\[
|\mathcal{L}(T)\cap\mathcal{F}'|\ge \frac{\alpha^{8k-6}n}2-\frac{\alpha^{8k-6}n}4\ge \frac{\alpha^{8k-6}n}4.
\]

Since $\mathcal{F}'$ consists of disjoint absorbing sets and each absorbing set is covered by a $K_k$-matching, $V(\mathcal{F}')$ is covered by some $K_k$-matching $M$. Since $|\mathcal{F}'|\le \alpha^{4k-2}n$, we have $|M|\le  2k(k-1)\alpha^{4k-2}n/k = 2(k-1)\a^{4k-2}n$.
Now consider a balanced set $W\subseteq V(G)\backslash V(\mathcal{F}')$ such that $|W\cap V_1|=\cdots=|W\cap V_k|\le \alpha^{8k-6}n/4$. Arbitrarily partition $W$ into at most $\alpha^{8k-6}n/4$ crossing $k$-tuples. We absorb each of the $k$-tuples with a different $2k(k-1)$-set from $\mathcal{L}(T)\cap\mathcal{F}'$. As a result, $V(\mathcal{F}')\cup W$ is covered by a $K_k$-matching, as desired.

\end{proof}

The rest of the paper is devoted to proving Lemma \ref{lem:reach}. First we prove a useful lemma. A weaker version of it appears in \cite[Proposition 1.4]{MaSz} with a brief proof sketch. 

\begin{lemma}
\label{lem3}
Let $k\ge 2$ be an integer, $t\ge 1$ and $\epsilon\ll 1$. Let $H$ be a $k$-partite graph on $V_1\cup \dots \cup V_k$ such that $|V_i| \ge (k-1)(1-\epsilon)t$ for all $i$ and each vertex is nonadjacent to at most $(1+\epsilon)t$ vertices in each of the other color classes. Then either $H$ contains at least $\epsilon^2 t^k $ copies of $K_k$, or $H$ is $(16 k^4 \epsilon^{1/{2^{k -2} }},16 k^4 \epsilon^{1/{2^{k-2} }})$-approximate to $\Theta_{k\times (k-1)}(t)$.
\end{lemma}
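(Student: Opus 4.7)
The plan is to proceed by induction on $k$. For the base case $k=2$, the non-adjacency bound forces at least $|V_2|(|V_1|-(1+\epsilon)t)$ edges in $H$; combined with $|V_2|\ge(1-\epsilon)t$, either this is at least $\epsilon^2 t^2$ edges (and each edge is a copy of $K_2$), or both $|V_1|$ and $|V_2|$ lie in $[(1-\epsilon)t,(1+2\epsilon)t]$ and the density $d(V_1,V_2)$ is at most $2\epsilon^2$, so $H$ is $(2\epsilon,2\epsilon^2)$-approximate to $\Theta_{2\times 1}(t)$ (which is simply $2t$ isolated vertices split into two color classes), well inside the claimed $256\epsilon$ budget.

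For the inductive step, fix $v\in V_1$ and consider the $(k-1)$-partite link graph $H_v$ on $(N(v)\cap V_2)\cup\dots\cup(N(v)\cap V_k)$. Each part has size at least $(k-1)(1-\epsilon)t-(1+\epsilon)t\ge(k-2)(1-\epsilon')t$ for some $\epsilon'=O(\epsilon)$, and each vertex of $H_v$ still has at most $(1+\epsilon)t\le(1+\epsilon')t$ non-neighbors in every other part, because passing to an induced subgraph only removes non-neighbors. Applying the inductive hypothesis to $H_v$ gives: either $H_v$ contains at least $(\epsilon')^2 t^{k-1}$ copies of $K_{k-1}$, or $H_v$ is $(D,D)$-approximate to $\Theta_{(k-1)\times(k-2)}(t)$ with $D=16(k-1)^4(\epsilon')^{1/2^{k-3}}$. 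Let $B\subseteq V_1$ be the set of $v$ in the first case. Every copy of $K_{k-1}$ in $H_v$ extends through $v$ to a copy of $K_k$ in $H$, so either $|B|$ is of order $t$ and summing already yields at least $\epsilon^2 t^k$ copies of $K_k$ (done), or $V_1\setminus B$ contains nearly all of $V_1$ and every such vertex has its link approximate to $\Theta_{(k-1)\times(k-2)}(t)$.

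Assuming the latter, I would fix a reference vertex $v_0\in V_1\setminus B$ and read off from $H_{v_0}$ a partition $V_i=V_{i,1}\cup\dots\cup V_{i,k-2}$ for each $i\ge 2$ with $|V_{i,j}|\approx t$ and $d(V_{i,j},V_{i',j})\le D$ for $i\ne i'$, then define the missing column $V_{i,k-1}:=V_i\setminus\bigcup_{j\le k-2}V_{i,j}$; the non-adjacency bound applied at $v_0$ forces $|V_{i,k-1}|$ to be within $O(\epsilon)t$ of $t$. For each $u\in V_1$ I would assign it to the column $j\in[k-1]$ minimizing $|N(u)\cap V_{2,j}|$, which in the model $\Theta_{k\times(k-1)}(t)$ is exactly the column that contains $u$.

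The main obstacle is showing that this alignment is globally consistent: the $\Theta_{(k-1)\times(k-2)}$-structure in each $H_u$ is defined only up to a permutation of its columns, so the low-density blocks of $H_u$ and those of $H_{v_0}$ must be matched on their common color classes $V_3,\dots,V_k$. The necessary rigidity is that two low-density blocks---one from $H_u$, one from $H_{v_0}$---projected onto the same $V_i$ are either essentially equal or essentially disjoint, since any significant overlap combined with an edge outside would produce too many $K_{k-1}$'s in one of the two links, contradicting $u,v_0\notin B$. Aggregating this rigidity uniformly over all $u\in V_1\setminus B$ and controlling the accumulated error by an averaging or Cauchy--Schwarz argument produces a single square-root loss, which is precisely the jump in the exponent from $1/2^{k-3}$ at level $k-1$ to $1/2^{k-2}$ at level $k$. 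Verifying that the resulting blocks satisfy $|V_{1,j}|=(1\pm O(\epsilon^{1/2^{k-2}}))t$ and $d(V_{i,j},V_{i',j})=O(\epsilon^{1/2^{k-2}})$ for all $i\ne i'$ (including the pairs with $i=1$, where the bound comes directly from the assignment rule for $u\in V_{1,j}$) is then a bookkeeping calculation.
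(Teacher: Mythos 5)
Your high-level framework (induction on $k$, pass to the link graph of a reference vertex $v_0$, partition the bad set $B\subset V_1$, inherit column structure from $H_{v_0}$, then classify every $u\in V_1$ by the column of $V_2$ it avoids) is the same as the paper's. The base case is fine. But the two steps you label ``main obstacle'' and ``bookkeeping'' are exactly the substantive claims, and they are not carried out.

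On the alignment issue: you do not actually need to apply the inductive hypothesis to every link $H_u$ and then match its column permutation against that of $H_{v_0}$. The paper applies induction only once, to $H_{v_0}$, and then argues \emph{directly} about each $u\in\tilde V_1$ relative to that fixed block structure: since $N(u)$ contains few copies of $K_{k-1}$, for every $i\ge 2$ there is a unique column $\ell_i(u)$ where $u$ has $<\eps''t$ neighbours in $A_{i\ell_i}$ (otherwise one builds $\gtrsim \eps''t^{k-1}$ copies of $K_{k-1}$ inside $N(u)$ via Proposition~\ref{clm:kk}), and moreover $\ell_2(u)=\cdots=\ell_k(u)$ (otherwise one finds diagonal blocks avoiding all the $\ell_i$'s and again too many $K_{k-1}$'s). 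This makes the ``rigidity'' you anticipate unnecessary; its function is replaced by the counting argument in Claims~\ref{clm:v1} and~\ref{clm:li=l2}, and your ``Cauchy--Schwarz/averaging'' sketch is not a substitute for it.

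More seriously, your final paragraph glosses over the density bound $d(V_{i,k-1},V_{i',k-1})$ for $2\le i\ne i'\le k$. That column is $V_i\setminus N(v_0)$, which lies entirely \emph{outside} the link $H_{v_0}$, so the inductive hypothesis says nothing about it; only its size is controlled by the degree bound at $v_0$. The paper has to prove this density bound separately (Claim~\ref{clm:row1}), by choosing a transversal of diagonal blocks $A_{11},A_{22},\dots,A_{(k-2)(k-2)}$ and showing that if $d(A_{(k-1)(k-1)},A_{k(k-1)})$ were large one could extend to $\gtrsim \sqrt{\eps'}\,t^k$ copies of $K_k$. Calling this a ``bookkeeping calculation'' misidentifies the hardest remaining step. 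Until both this claim and the consistency of the column assignment are proved, the proposal has genuine gaps.
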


\begin{proof}
First we derive an upper bound for $|V_i|$, $i\in [k]$.  Suppose for example, that $|V_k|\ge (k-1)(1+\epsilon)t+\epsilon t$. Then if we greedily construct copies of $K_k$ while choosing the last vertex from $V_k$, by the minimum degree condition and $\epsilon\ll 1$, there are at least
\begin{align*}
& |V_1|\cdot (|V_2|-(1+\epsilon)t) \cdots (|V_{k-1}|-(k-2)(1+\epsilon)t)\cdot (|V_k|-(k-1)(1+\epsilon)t)\\
\ge & (k-1)(1-\epsilon)t\cdot (k-2 -k\epsilon)t \cdots (1-(2k-3)\epsilon) t\cdot \epsilon t\\
\ge & (k-1 - \tfrac12) (k-2 - \tfrac12) \cdots (1- \tfrac12) \epsilon t^k \ge \tfrac{\epsilon}2 t^k
\end{align*}
copies of $K_k$ in $H$, so we are done. We thus assume that for all $i$,
\begin{equation}\label{eq:upbd}
|V_i|\le (k-1)(1+\epsilon)t+\epsilon t < (k-1)(1+ 2\epsilon)t.
\end{equation}

Now we proceed by induction on $k$. The base case is $k=2$. If $H$ has at least $\epsilon^2 t^2$ edges, then we are done. Otherwise $e(H)<\epsilon^2 t^2$. Using the lower bound for $|V_i|$, we obtain that
$$
d(V_1,V_2)<\frac{\epsilon^2 t^2}{|V_1|\cdot |V_2|}\le \frac{\epsilon^2}{(1-\epsilon)^2}<\epsilon.
$$
Hence $H$ is $(2\epsilon,\epsilon)$-approximate to $\Theta_{2\times 1}(t)$. When $k=2$,  $16 k^4 \epsilon^{1/{2^{k -2} }}= 256 \eps$, so we are done.

Now assume that $k\ge 3$ and the conclusion holds for $k-1$. Let $H$ be a $k$-partite graph satisfying the assumptions and assume that $H$ contains less than $ \epsilon^2 t^k$ copies of $K_k$.

For simplicity, write $N_i(v)=N(v)\cap V_i$ for any vertex $v$. Let $V_1'\subset V_1$ be the vertices which are in at least $\eps t^{k-1}$ copies of $K_k$ in $H$, and let $\tilde V_1 = V_1\setminus V_1'$. Note that $|V_1'| < \eps t$ otherwise we get at least $\eps^2 t^k$ copies of $K_k$ in $H$. Fix $v_0 \in \tilde V_1$. For $2\le i\le k$, by the minimum degree condition and $k\ge 3$,
\[
|N_i(v_0)| \ge (k-1)(1-\epsilon)t - (1 + \eps) t = (k-2) \left( 1-\frac k{k-2}\eps \right)t \ge (k-2)(1-3\eps)t.
\]
On the other hand, following the same arguments as we used for \eqref{eq:upbd}, we derive that
\begin{equation}\label{eq:upbd2}
|N_i(v_0)| \le (k - 2)(1 + 2\eps t).
\end{equation}

The minimum degree condition implies that a vertex in $N(v_0)$ misses at most $(1 + \eps)t$ vertices in each $N_i(v_0)$. We now apply induction with $k-1$, $t$ and $3\eps$ on $H[N(v_0)]$. Because of the definition of $V_1'$, we conclude that $N(v_0)$ is ($\eps', \eps'$)-approximate to $\Theta_{(k-1)\times (k-2)}(t)$, where
\[
\eps' := 16 (k-1)^4 (3\epsilon)^{1/{2^{k-3}}}.
\]
This means that we can partition $N_i (v_0)$ into $A_{i1} \cup \cdots A_{i(k-2)}$ for $2\le i \le k$ such that
\begin{align}
\label{eq:size22}
& \forall \text{ } 2\le i\le k, \ 1\le j\le k - 2, \quad (1- \eps')t \le |A_{ij}| \le (1+ \eps') t  \qquad \text{and}
\\
\label{eq:den22}
& \forall \text{ } 2\le i< i'\le k, \ 1\le j\le k - 2, \quad d(A_{ij}, A_{i'j})\le \eps'.
\end{align}
Furthermore, let $A_{i(k-1)} := V_i \setminus N(v_0)$ for $i = 2,\cdots, k$. By \eqref{eq:upbd2} and the minimum degree condition, we get that
\begin{equation}\label{eq:size21}
 (1- (3k - 5)\eps)t \le |A_{i(k-1)}| \le (1+ \eps) t,
\end{equation}
for $i = 2,\cdots, k$.

Let $A_{ij}^c = V_i \setminus A_{ij}$ denote the complement of $A_{ij}$. Let $\bar{e}(A, B)= |A| |B| - e(A, B)$ denote the number of non-edges between two disjoint sets $A$ and $B$, and $\bar{d}(A, B)= \bar{e}(A, B)/(|A| |B|)= 1 - d(A, B)$. Given two disjoint sets $A$ and $B$ (with density close to one) and $\alpha>0$, we call a vertex $a\in A$ is $\alpha$-\emph{typical} to $B$ if $\deg_B(a)\ge (1- \alpha)|B|$.

\begin{claim}\label{clm:dia}
Let $2\le i\neq i '\le k$, $1\le j\neq j'\le k-1$.
\begin{enumerate}
\item $d(A_{ij}, A_{i'j'}) \ge 1 - 3\eps'$ and $d(A_{ij}, A_{i'j}^c) \ge 1 - 3\eps'$.
\item All but at most $\sqe$ vertices in $A_{ij}$ are $\sqe$-typical to $A_{i'j'}$; at most $\sqe$ vertices in $A_{ij}$ are $\sqe$-typical to $A^c_{i'j}$.
\end{enumerate}
\end{claim}

\begin{proof}
(1). Since $A^c_{i'j}= \bigcup_{j'\ne j} A_{i'j'}$, the second assertion $d(A_{ij}, A_{i'j}^c) \ge 1 - 3\eps'$ immediately follows from the first assertion $d(A_{ij}, A_{i'j'}) \ge 1 - 3\eps'$. Thus it suffices to show that $d(A_{ij}, A_{i'j'}) \ge 1 - 3\eps'$, or equivalently that $\bar{d}(A_{ij}, A_{i'j'}) \le 3\eps'$.

Assume $j\ge 2$. By \eqref{eq:den22}, we have $e(A_{ij}, A_{i'j})\le \eps' |A_{ij}| |A_{i'j}|$.  So $\bar{e}( A_{ij}, A_{i'j})\ge (1 - \eps')|A_{ij}| |A_{i'j}|$. By the minimum degree condition and \eqref{eq:size22},
\begin{align*}
\bar{e}(A_{ij}, A_{i'j}^c) & \le [(1+\eps)t - (1-\eps')|A_{i'j}|] |A_{ij}| \\
& \le [(1+\eps)t - (1-\eps') (1- \eps')t] |A_{ij}| \\
& < (\eps + 2\eps') t |A_{ij}|,
\end{align*}
which implies that $\bar{e}((A_{ij}, A_{i'j'}) \le  (\eps + 2\eps') t |A_{ij}|$ for any $j' \ne j$ and $1\le j' \le k-1$. By \eqref{eq:size22} and \eqref{eq:size21}, we have $|A_{i'j'}|\ge (1- \eps')t$. Hence
\[
\bar{d}(A_{ij}, A_{i'j'})\le (\eps + 2\eps') \frac{t}{|A_{i'j'}|}\le (\eps + 2\eps') \frac{t}{(1- \eps')t} \le 3\eps',
\]
where the last inequality holds because $\eps \ll \eps' \ll 1$.

(2) Given two disjoint sets $A$ and $B$, if $\bar{d}(A, B)\le \alpha$ for some $\alpha>0$,  then at most $\sqrt{\alpha} |A|$ vertices $a\in A$ satisfy $\deg_B (a) < (1 - \sqrt{\alpha}) |B|$. Hence Part (2) immediately follows from Part (1).
\end{proof}

We need a lower bound for the number of copies of $K_k$ in a dense $k$-partite graph.
\begin{proposition}\label{clm:kk}
Let $G$ be a $k$-partite graph with vertex class $V_1, \cdots, V_k$. Suppose for every two vertex classes, the pairwise density $d(V_i, V_j)\ge 1 - \a$ for some $\a \le (k+1)^{-4}$, then there are at least $\frac12\prod_i |V_i|$ copies of $K_k$ in $G$.
\end{proposition}

\begin{proof}
Given two disjoint sets $V_i$ and $V_j$, if $\bar{d}(V_i, V_j)\le \alpha$ for some $\alpha>0$,  then at most $\sqrt{\alpha} |V_i|$ vertices $v\in V_i$ satisfy $\deg_{V_j} (v) < (1 - \sqrt{\alpha}) |V_j|$. Thus, by choosing typical vertices greedily and the assumption $\a \le (k+1)^{-4}$, there are at least
\[
(1 - \sqrt{\alpha}) |V_1| (1 - 2\sqrt{\alpha}) |V_2| \cdots (1 - k\sqrt{\alpha}) |V_k| > (1 - (1+\cdots + k)\sqrt \a) \prod_i |V_i|> \frac12 \prod_i |V_i|
\]
copies of $K_k$ in $G$.
\end{proof}

\medskip

Let $\eps'' = 2k \sqrt{\eps'}$. Now we want to study the structure of $\tilde{V}_1$.

\begin{claim}\label{clm:v1}
Given $v\in \tilde V_1$ and $2\le i \le k$, there exists $j\in [k-1]$, such that $|N_{A_{ij}}(v)|<\eps''t$.
\end{claim}

\begin{proof}
Suppose instead, that there exist $v\in \tilde{V}_1$ and some $2\le i_0\le k$, such that $|N_{A_{i_0 j}}(v)|\ge \eps''t$ for all $j\in [k-1]$. By the minimum degree condition, for each $2\le i\le k$, there is at most one $j\in [k-1]$ such that $|N_{A_{ij}}(v)|< t/3$. Therefore we can greedily choose $k - 2$ distinct $j_i$ for $i\ne i_0$, such that $|N_{A_{ij_i}}(v)|\ge t/3$. Let $j_{i_0}$ be the the (unique) unused index. Note that
\[
\forall \text{ } i\ne i_0, \quad \frac{|A_{ij_i}|}{|N_{A_{ij_i}}(v)|}\le \frac{(1 + \eps') t}{t/3}<4, \quad \text{ and } \quad \frac {|A_{i_0 j_{i_0}}|} {|N_{A_{i_0 j_{i_0}}}(v)|} \le \frac {(1 + \eps') t}{\eps'' t}<\frac 2{\eps''}
\]
So for any $i\ne i'$, by Claim \ref{clm:dia} and the definition of $\eps''$, we have
\begin{equation}\label{eq:v1}
\bar d(N_{A_{ij_i}}(v), N_{A_{i'j_{i'}}}(v)) \le \frac{3\eps' |A_{ij_i}||A_{i'j_{i'}}|}{|N_{A_{ij_i}}(v)||N_{A_{i'j_{i'}}}(v)|}  \le 3\eps' \cdot 4\cdot \frac{2}{\eps'' } = \frac 6{k^2}\eps''.
\end{equation}
Since $\eps \ll \eps'' \ll 1$, by Proposition~\ref{clm:kk}, there are at least
\[
\frac12 \prod_i N_{A_{ij_i}}(v) \ge \frac12 \cdot \eps''t \left(\frac t3 \right)^{k-2} = \frac{\eps''}{2\cdot 3^{k-2}}t^{k-1} > \eps t^{k-1}
\]
copies of $K_{k-1}$ in $N(v)$, contradicting the assumption $v\in \tilde V_1$.
\end{proof}

Note that if $\deg_{A_{ij}}(v)< \eps'' t$, at least $|A_{i j}| - \eps'' t$ vertices of $A_{i j}$ are not in $N(v)$. By the minimum degree condition, \eqref{eq:size22} and \eqref{eq:size21}, it follows that
\begin{equation}
\label{eq:deg1}
|A^c_{i j} \setminus N(v)| \le (1+ \eps)t - (|A_{ij}| - \eps'' t)\le (1+ \eps)t - (1- \eps') t + \eps'' t \le 2 \epsilon'' t.
\end{equation}
Fix a vertex $v\in \tilde V_1$. Given $2\le i\le k$, let $\ell_i$ denote the (unique) index such that $|N_{A_{i \ell_i}}(v)| < \eps'' t$ (the existence of $\ell_i$ follows from Claim~\ref{clm:v1}).

\begin{claim}\label{clm:li=l2}
We have $\ell_2 = \ell_3 = \cdots = \ell_k$.
\end{claim}

\begin{proof}
Otherwise, say $\ell_2\ne \ell_3$, then we set $j_2 = \ell_3$ and for $3\le i\le k$, greedily choose distinct $j_k, j_{k-1}, \dots, j_3\in [k-1]\setminus \{\ell_3\}$ such that $j_i\neq \ell_i$ (this is possible as $j_3$ is chosen at last). Let us bound the number of copies of $K_{k-1}$ in $\bigcup_{i=2}^{k} N_{A_{i j_i}}(v)$.
By \ref{eq:deg1}, we get $|N_{A_{i j_i}}(v)|\ge |A_{i j_i}| - 2 \eps'' t \ge t/2$ for all $i$.  As in \eqref{eq:v1}, for any $i\neq i'$, we derive that $\bar d(N_{A_{ij_i}}(v), N_{A_{i'j_{i'}}}(v)) \le 3\eps'' \cdot4\cdot 4 = 48\eps''$.
Since $\eps'' \ll 1$, by Proposition~\ref{clm:kk}, we get at least $\frac12 \left( \frac t2  \right)^{k-1} > \eps t^{k-1}$ copies of $K_{k-1}$ in $N(v)$, a contradiction.
\end{proof}

We define $A_{1j} := \{v\in \tilde V_1: |N_{A_{2j}}(v)|< \eps'' t \}$ for $j\in [k-1]$.
By Claims~\ref{clm:v1} and \ref{clm:li=l2}, this yields a partition of $\tilde V_1 = \bigcup_{j=1}^{k-1} A_{1j}$ such that
\begin{equation}\label{eq:den1}
d(A_{1j}, A_{ij})< \frac{\eps''t |A_{1j}|}{|A_{1j}||A_{ij}|} \le\frac{\eps''t}{(1 - \eps')t} < (1+2\eps') \eps'' \quad \text{ for } \ i\ge 2  \ \text{ and } \ j\ge 1.
\end{equation}

By \eqref{eq:size22}, \eqref{eq:size21} and \eqref{eq:deg1}, as $(3k-5)\eps \le \eps'$, we have
\begin{equation}\label{eq:den1dia}
\bar d(A_{1j}, A_{ij'})<\frac {|A_{1j}| 2\eps'' t }{|A_{1j}||A_{ij'}|}\le \frac {2\eps'' t} {(1-\eps')t} < 3\eps'' \quad \text{ for } i\ge 2 \ \text{ and } \ j\neq j'.
\end{equation}
We claim $|A_{1j}| \le (1+ \eps)t + (1+ 2\eps') \eps''|A_{1j}|$ for all $j$. Otherwise, by the minimum degree condition, we have $\deg_{A_{1j}} (v)> (1+2\eps') \eps'' |A_{1j}|$ for all $v\in A_{ij}$, and consequently $d(A_{1j},A_{ij}) > (1+2\eps') \eps''$, contradicting \eqref{eq:den1}. We thus conclude that
\begin{equation}\label {eq:size1u}
|A_{1j}|\le \frac{1+\eps}{1- (1+2\eps') \eps''} t< (1+ 2\epsilon'' )t.
\end{equation}
Since $|V'_1|\le \eps t$, we have $|\bigcup_{j=1}^{k-1} A_{1j}| = |V_1\setminus V'_1| \ge |V_1| - \eps t$. Using \eqref{eq:size1u}, we now obtain a lower bound for $|A_{1j}|$, $j\in[k-1]$:
\begin{equation}\label {eq:size1l}
|A_{1j}| \ge (k-1)(1-\epsilon)t - (k-2)(1+2\epsilon'' )t - \epsilon t \ge (1- 2k\epsilon'') t.
\end{equation}

It remains to show that for $2\le i \ne i'\le k$, $d(A_{i(k-1)},A_{i'(k-1)})$ is small.
Write $N(v_1 \cdots v_m)=\bigcap_{1\le i\le m} N(v_i)$.

\begin{claim} \label{clm:row1}
$d(A_{i(k-1)},A_{i'(k-1)})\le 6 \epsilon''$ for $2\le i,i'\le k$.
\end{claim}

\begin{proof}
Suppose to the contrary, that say $d(A_{(k-1)(k-1)},A_{k(k-1)})> 6 \epsilon''$. 
We first select $k-2$ sets $A_{ij}$ with $1\le i\le k-2$ and $1\le j\le k-2$ such that no two of them are on the same row or column -- there are $(k-2)!$ choices. Fix one of them, say $A_{11},A_{22},\cdots,A_{(k-2)(k-2)}$. We construct copies of $K_{k-2}$ in $A_{11} \cup A_{22} \cup \cdots \cup A_{(k-2)(k-2)}$ as follows. Pick arbitrary $v_1\in A_{11}$.
For $2 \le i\le k-2$, we select $v_i\in N_{A_{ii}}(v_1 \cdots v_{i-1})$ such that $v_i$ is $\sqe$-typical to $A_{(k-1)(k-1)}$, $A_{k(k-1)}$ and all $A_{jj}$, $i< j\le k-2$. By Claim~\ref{clm:dia} and \eqref{eq:deg1}, there are at least $(1- (k-2)\sqe) |A_{ii}| - 2 \eps'' t \ge t/2$ choices for each $v_i$.
After selecting $v_1, \dots, v_{k-2}$, we select adjacent vertices $v_{k-1}\in A_{(k-1)(k-1)}$ and $v_k\in A_{k(k-1)}$ such that $v_{k-1}, v_k \in N(v_1 \cdots v_{k-2})$. For $j\in \{k-1, k\}$, we know that $N(v_1)$ misses at most $2 \eps'' t$ vertices in $A_{j(k-1)}$, and at most $(k-3)\sqe |A_{j(k-1)}|$ vertices of $A_{j(k-1)}$ are not in $N(v_2 \cdots v_{k-2})$. Since $d(A_{(k-1)1},A_{k1})> 6 \epsilon''$ and $\eps''= 2k\sqrt{\eps'}$, there are at least
\begin{align*}
&6 \eps'' |A_{(k-1)(k-1)}| |A_{k(k-1)}| - 2\eps'' t (|A_{(k-1)(k-1)}| + |A_{k(k-1)}|) - 2 (k-3)\sqe |A_{(k-1)(k-1)}| |A_{k(k-1)}|\\
&\ge (6\eps''- 4\eps''- 4 (k-3) \sqrt{\eps'} )|A_{(k-1)(k-1)}| |A_{k(k-1)}|\\
& = 12 \sqrt{\eps'} |A_{(k-1)(k-1)}| |A_{k(k-1)}| \ge 6 \sqrt{\eps'} t^2
\end{align*}
such pairs $v_{k-1}, v_k$. Together with the choices of $v_1, \cdots, v_{k-2}$, we obtain at least $(k-2)!(\tfrac{t}{2})^{k-2} \, 6\sqrt{\eps'} t^2 > \eps t^k$ copies of $K_k$,
a contradiction.
\end{proof}

In summary, by \eqref{eq:size22}, \eqref{eq:size21}, \eqref{eq:size1u} and \eqref{eq:size1l}, we have $(1- 2k\eps'') t \le |A_{ij}| \le (1+ 2\eps'') t$ for all $i$ and $j$. In order to make $\bigcup_{j=1}^{k-1} A_{1j}$ a partition of $V_1$, we move the vertices of $V'_1$ to $A_{11}$. Since $|V_1'|<\eps t$, we still have $| |A_{ij}| - t | \le 2k\eps'' t$ after moving these vertices. On the other hand, by \eqref{eq:den22}, \eqref{eq:den1}, and Claim~\ref{clm:row1}, we have $d(A_{ij}, A_{i'j}) \le 6\eps'' \le 2k\eps''$ for $i\ne i'$ and all $j$ (we now have $d(A_{11}, A_{i1}) \le 2\eps''$ for all $i\ge 2$ because $|A_{11}|$ becomes slightly larger). Therefore $H$ is ($2k\eps'', 2k\eps''$)-approximate to $\Theta_{k\times (k-1)} (t)$. By the definitions of $\eps''$ and $\eps'$,
\[
2k \eps'' = 4k^2 \sqrt{\eps'} = 4k^2 \sqrt{16 (k-1)^4 (3\eps)^{1/2^{k-3}} } \le16 k^4 \eps^{1/ 2^{k-2}},
\]
where the last inequality is equivalent to $(\frac{k-1}{k})^2 \, 3^{1/2^{k-2}} \le 1$ or $3^{1/ 2^{k-1}} \le \frac{k}{k-1}$, which holds because $3\le 1 + \frac{2^{k-1}}{k-1} \le (1 + \frac{1}{k-1})^{2^{k-1}}$ for $k\ge 2$.

This completes the proof of Lemma~\ref{lem3}.
\end{proof}

\bigskip

We are ready to prove Lemma \ref{lem:reach}.

\begin{proof}
[Proof of Lemma \ref{lem:reach}]
First assume that $G\in \G_3(n)$ is minimal, namely, $G$ satisfies the minimum partite degree condition but removing any edge of $G$ will destroy this condition. Note that this assumption is only needed by Claim \ref {clm:row1k2}.

Given $0< \Delta \le 1$, let
\begin{equation}
\label{eq:ad}
\a = \frac1{2k} \left( \frac{\Delta}{24 k(k-1) \sqrt{2k} } \right)^{2^{k-1}}.
\end{equation}
Without loss of generality, assume that $x, y\in V_1$ and $y$ is \emph{not} reachable by $\alpha^3 n^{k-1}$ $(k-1)$-sets or $\alpha^3 n^{2k-1}$ $(2k-1)$-sets from $x$.

For $2\le i\le k$, define
\begin{align*}
   A_{i1}=V_i\cap (N(x)\setminus N(y)), &   \quad A_{ik}=V_i\cap (N(y)\setminus N(x)),  \\
   B_i=V_i\cap (N(x)\cap N(y)), &    \quad A_{i0}=V_i\setminus (N(x)\cup N(y)).
\end{align*}

Let $B=\bigcup_{i\ge 2} B_i$. If there are at least $\alpha^3 n^{k-1}$ copies of $K_{k-1}$ in $B$, then $x$ is reachable from $y$ by at least $\alpha^3 n^{k-1}$ $(k-1)$-sets. We thus assume there are less than $\alpha^3 n^{k-1}$ copies of $K_{k-1}$ in $B$.

Clearly, for $i\ge 2$, $A_{i1}$, $A_{ik}$, $B_i$ and $A_{i0}$ are pairwise disjoint. The following claim bounds the sizes of $A_{ik}$, $B_i$ and $A_{i0}$.

\begin{claim}
\label{clm:Ai1}
\begin{enumerate}
\item $(1- k^2 \alpha) \frac{n}{k} <|A_{i1}|,|A_{ik}|\le (1 + k\alpha)\frac{n}{k}$,
\item $(k-2 - 2k\alpha)\frac{n}{k} \le|B_i|< \left(k -2 + k(k-1)\alpha \right) \frac{n}{k}$,
\item $|A_{i0}|<(k+1)\alpha n$.
\end{enumerate}
\end{claim}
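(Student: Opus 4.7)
The plan is to derive the three bounds in succession, using throughout the partition identity $|A_{i1}|+|A_{ik}|+|B_i|+|A_{i0}|=n$, the relations $|N(x)\cap V_i|=|A_{i1}|+|B_i|$ and $|N(y)\cap V_i|=|A_{ik}|+|B_i|$, and the consequences $|V_i\setminus N(x)|,|V_i\setminus N(y)|\le n/k+\alpha n$ of the minimum partite degree condition.

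I would start with the easy upper bounds in (1): since $A_{ik}\subseteq V_i\setminus N(x)$, we get $|A_{ik}|\le(1+k\alpha)n/k$, and symmetrically $|A_{i1}|\le(1+k\alpha)n/k$. Feeding these into $|A_{ik}|+|B_i|\ge(k-1)n/k-\alpha n$ yields $|B_i|\ge(k-2-2k\alpha)n/k$, the lower bound in (2).

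The heart of the proof is the upper bound in (2); this is where the standing assumption that $B=\bigcup_{i\ge 2}B_i$ contains fewer than $\alpha^3 n^{k-1}$ copies of $K_{k-1}$ must enter. I plan to argue contrapositively: assuming $|B_2|\ge(k-2+k(k-1)\alpha)n/k$ (WLOG), I will produce at least $\alpha^3 n^{k-1}$ copies of $K_{k-1}$ in $B$. The fundamental estimate is that for $v\in B_j$ and $i\ne j$,
\[
|N(v)\cap B_i|\ \ge\ |B_i|-n/k-\alpha n.
\]
When $k=3$, this forces every $v\in B_3$ to have $\ge\alpha n$ neighbors in $B_2$, so $e(B_2,B_3)\ge|B_3|\cdot\alpha n\ge\alpha n^2/4>\alpha^3 n^2$. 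For $k\ge 4$, I would greedily build partite-ordered copies $(v_3,\dots,v_k)\in B_3\times\cdots\times B_k$ of $K_{k-2}$: the lower bound on $|B_j|$ just established, combined with the displayed estimate, leaves at step $j$ at least $(k-j+1)n/k-(j-1)\alpha n\ge(k-j+1)n/(2k)$ valid choices, giving at least $(k-2)!\,n^{k-2}/(2k)^{k-2}$ such $K_{k-2}$'s. Each extends to a $v_2\in B_2\cap N(v_3)\cap\cdots\cap N(v_k)$ in at least $|B_2|-(k-2)(n/k+\alpha n)>\alpha n$ ways by the assumption on $|B_2|$, producing more than $\alpha^3 n^{k-1}$ copies of $K_{k-1}$ once $\alpha$ is small enough that $(k-2)!/(2k)^{k-2}\ge\alpha^2$, the desired contradiction.

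With the upper bound on $|B_i|$ in hand, the remaining bounds are arithmetic. For (1), $|A_{i1}|\ge|N(x)\cap V_i|-|B_i|\ge(k-1)n/k-\alpha n-(k-2+k(k-1)\alpha)n/k=(1-k^2\alpha)n/k$, and symmetrically for $|A_{ik}|$. For (3), summing the two degree inequalities gives $n-|A_{i0}|+|B_i|=|N(x)\cap V_i|+|N(y)\cap V_i|\ge 2(k-1)n/k-2\alpha n$, which rearranges to $|A_{i0}|\le|B_i|-(k-2)n/k+2\alpha n\le(k+1)\alpha n$. The main obstacle is the greedy counting in the third paragraph: one has to ensure that the losses $(j-1)\alpha n$ from successive intersections stay well below the running lower bound $(k-j+1)n/k$ on $|B_j\cap\bigcap_{\ell<j}N(v_\ell)|$, and that the absolute constant $(k-2)!/(2k)^{k-2}$ dominates $\alpha^2$, which forces the precise shape of the constants appearing in the claim.
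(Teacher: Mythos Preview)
Your proposal is correct and follows essentially the same approach as the paper: easy upper bounds on $|A_{i1}|,|A_{ik}|$ from the degree condition, the lower bound on $|B_i|$ by subtraction, the upper bound on $|B_i|$ by a greedy $K_{k-1}$ count in $B$ (placing the ``large'' $B_i$ last so the final step yields at least $\alpha n$ choices), and the remaining bounds by arithmetic. The only cosmetic differences are that the paper handles all $k\ge 3$ in one uniform greedy argument (no separate $k=3$ case) and derives (3) via $|A_{i0}|=n-|N_i(x)|-|A_{ik}|$ rather than your inclusion--exclusion identity; both routes give the same constants.
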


\begin{proof}
For $v\in V$, and $i\in[k]$, write $N_i(v):=N(v)\cap V_i$.
By the minimum degree condition, we have $|A_{i1}|,|A_{ik}|\le (1/k+\alpha)n$.
Since $N_i(x)=A_{i1}\cup B_i$, it follows that
\begin{equation}
\label{eq:Bi}
|B_i|\ge (\tfrac{k-1}k-\alpha)n-(\tfrac1k+\alpha)n. 
\end{equation}

If some $B_i$, say $B_k$, has at least $(\tfrac{k-2}k+(k-1)\alpha)n$ vertices, then there are at least
$ \prod_{i=2}^k |B_i| - (i-2) \left(\frac{1}{k} + \alpha \right) n$ copies of $K_{k-1}$ in $B$.
By \eqref{eq:Bi} and $|B_k|\ge (\tfrac{k-2}k+(k-1)\alpha)n$, this is at least
\begin{align*}
 & \a n \cdot \prod_{i=2}^{k-1} \left( \frac{k-1}{k} - \a \right) n - (i-1) \left(\frac{1}{k} + \alpha \right) n \\
= & \a n \cdot \prod_{i=2}^{k-1} \left( \frac{k-i}{k} -  i\a \right) n \\
\ge & \a n \cdot \prod_{i=2}^{k-1} \left( \frac{k-i - \tfrac12}{k} \right) n \quad \text{because } 2k^2 \a \le 1, \\
\ge &  \a n \cdot \frac12 \left(\frac{n}{k} \right)^{k-2} \\
\ge & \a^2 n^{k-1} \quad \text{because } 2k^{k-2} \a \le 1.
\end{align*}
This is a contradiction.

We may thus assume that $|B_i|<(\tfrac{k-2}k+(k-1)\alpha)n$ for $2\le i \le k$, as required for Part (2). As $N_i(x)=A_{i1}\cup B_i$, it follows that
$$
|A_{i1}|> (\tfrac{k-1}k-\alpha)n-(\tfrac{k-2}k+(k-1)\alpha)n=(\tfrac1k-k\alpha)n.
$$
The same holds for $|A_{ik}|$ thus Part (1) follows. Finally
\[
|A_{i0}| = |V_i| - |N_i(x)| - |A_{ik}| < n-(\tfrac{k-1}k-\alpha)n-(\tfrac1k-k\alpha)n = (k+1)\alpha n,
\]
as required for Part (3).
\end{proof}

Let $t= n/k$ and $\epsilon=2k\alpha$. By the minimum degree condition, every vertex $u\in B$ is nonadjacent to at most $(1+ k\alpha) n/k <(1+\epsilon)t$ vertices in other color classes of $B$. By Claim~\ref{clm:Ai1}, $|B_i|\ge (k-2 - 2k\alpha)\frac{n}{k} =(k-2-\epsilon)t\ge (k-2)(1-\epsilon)t$.  Thus $G[B]$ is a $(k-1)$-partite graph that
satisfies the assumptions of Lemma~\ref{lem3}. We assumed that $B$ contains less than $\alpha^3 n^{k-1}<\eps^2 t^{k-1}$ copies of $K_{k-1}$, so by Lemma \ref{lem3}, $B$ is $(\a', \a')$-approximate to $\Theta_{(k-1)\times(k-2)}(\tfrac nk)$, where
\[
\alpha':=16(k-1)^4 (2k\alpha)^{1/2^{k-3}}.
\]
This means that we can partition $B_i$, $2\le i \le k$, into $A_{i2} \cup \cdots A_{i(k-1)}$ such that $(1- \alpha')\tfrac nk \le |A_{ij}| \le (1+ \alpha') \tfrac nk$ for $2\le j\le k-1$ and
\begin{equation}\label{eq:2den middle}
    \forall \text{ } 2\le i < i'\le k, 2\le j\le k-1, \quad d(A_{ij}, A_{i'j})\le \alpha'.
\end{equation}

Together with Claim~\ref{clm:Ai1} Part (1),  we obtain that (using $k^2 \a\le \a'$)
\begin{equation}
\label{eq:2size}
\forall \text{ } 2\le i \le k, 1\le j\le k, \quad (1- \alpha')\tfrac nk \le |A_{ij}| \le (1+ \alpha') \tfrac{n}{k}.
\end{equation}

Let $A_{ij}^c = V_i \setminus A_{ij}$ denote the complement of $A_{ij}$. The following claim is an analog of Claim~\ref{clm:dia}, and its proof is almost the same -- after we replace $(1+\eps)t$ with $(1+ k\a)n/k$ and $\eps'$ with $\a'$ (and we use $\a \ll \a'$). We thus omit the proof.

\begin{claim}\label{clm:2dia}
Let $2\le i\neq i '\le k$, $1\le j\neq j'\le k$, and $\{j, j'\} \ne \{1, k\}$.
\begin{enumerate}
\item $d(A_{ij}, A_{i'j'}) \ge 1 - 3\alpha'$ and $d(A_{ij}, A_{i'j}^c) \ge 1 - 3\alpha'$.
\item All but at most $\sqa$ vertices in $A_{ij}$ are $\sqa$-typical to $A_{i'j'}$; at most $\sqa$ vertices in $A_{ij}$ are $\sqa$-typical to $A^c_{i'j}$. \qed
\end{enumerate}
\end{claim}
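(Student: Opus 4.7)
The plan is to follow the proof of Claim~\ref{clm:dia} essentially verbatim, with the substitutions $\eps'\leftrightarrow \alpha'$, $t\leftrightarrow n/k$, and $(1+\eps)t\leftrightarrow (1+k\alpha)(n/k)$, the last being the per-vertex non-neighbor count supplied by the hypothesis $\delta^*(G)\ge (1-1/k-\alpha)n$. The sole new conceptual point is that the restriction $\{j,j'\}\ne\{1,k\}$ is tailored so that at least one index in each admissible pair lies in $\{2,\ldots,k-1\}$, i.e., in the range where the density bound \eqref{eq:2den middle} (coming from Lemma~\ref{lem3} applied to $B$) is actually available. The ``outer'' columns $1$ and $k$ correspond to $N(x)\setminus N(y)$ and $N(y)\setminus N(x)$, which are never jointly controlled by the partition produced for $B$.

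For Part~(1), I would first swap the roles of $(i,j)$ and $(i',j')$ if necessary so that WLOG $j\in\{2,\ldots,k-1\}$. Then \eqref{eq:2den middle} gives $e(A_{ij},A_{i'j})\le \alpha'|A_{ij}||A_{i'j}|$, so $\bar e(A_{ij},A_{i'j})\ge (1-\alpha')|A_{ij}||A_{i'j}|$. Subtracting this from the global bound $\bar e(A_{ij},V_{i'})\le (1+k\alpha)(n/k)|A_{ij}|$ and lower-bounding $|A_{i'j}|$ via \eqref{eq:2size} yields $\bar e(A_{ij},V_{i'}\setminus A_{i'j})\le (k\alpha+2\alpha')(n/k)|A_{ij}|$. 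Because $A_{i'j'}\subseteq V_{i'}\setminus A_{i'j}$, the same bound holds for $\bar e(A_{ij},A_{i'j'})$; dividing by $|A_{ij}||A_{i'j'}|$ and using $|A_{i'j'}|\ge (1-\alpha')(n/k)$ together with $\alpha\ll\alpha'\ll 1$ produces $\bar d(A_{ij},A_{i'j'})\le 3\alpha'$. For the complement assertion I decompose $A_{i'j}^c = A_{i'0}\cup \bigcup_{j''\ne j,\,1\le j''\le k}A_{i'j''}$; since $j\in\{2,\ldots,k-1\}$, every remaining index $j''\ne j$ in $[k]$ satisfies $\{j,j''\}\ne\{1,k\}$, so the first assertion applies to each non-trivial piece, and $|A_{i'0}|<(k+1)\alpha n$ (Claim~\ref{clm:Ai1}(3)) is small enough that the weighted average over $A_{i'j}^c$ still exceeds $1-3\alpha'$.

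Part~(2) is immediate from Part~(1) via the standard Markov-type observation recorded inside Claim~\ref{clm:dia}(2): if $\bar d(A,B)\le \beta$, then at most $\sqrt{\beta}\,|A|$ vertices of $A$ have fewer than $(1-\sqrt{\beta})|B|$ neighbors in $B$. Applying this with $\beta=3\alpha'$ to the pairs $(A_{ij},A_{i'j'})$ and $(A_{ij},A_{i'j}^c)$ delivers both halves at once. I do not anticipate a genuine obstacle: the only mildly subtle step is the complement bookkeeping just noted, where one must verify that the extra ``trash'' set $A_{i'0}$ is small enough to absorb into the $3\alpha'$ budget; everything else is a direct transcription of the proof of Claim~\ref{clm:dia}.
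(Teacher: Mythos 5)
Your proposal is correct and matches the paper's (omitted) proof, which the paper describes as a direct transcription of Claim~\ref{clm:dia}'s proof under the substitutions $\eps'\to\alpha'$, $t\to n/k$, $(1+\eps)t\to(1+k\alpha)n/k$. You correctly supply the one piece of bookkeeping the paper's remark ``almost the same'' glosses over, namely that $A_{i'j}^c$ now also contains the uncontrolled set $A_{i'0}$, whose size $<(k+1)\alpha n$ (Claim~\ref{clm:Ai1}(3)) is small enough (since $\alpha\ll\alpha'$) to absorb into the $3\alpha'$ budget.
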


Now let us study the structure of $V_1$. Let $\alpha''=2k \sqrt{\alpha'}$. Recall that $N(xv)=N(x)\cap N(v)$. Let $V_1'$ be the set of the vertices $v\in V_1$ such that there are at least $\a n^{k-1}$ copies of $K_{k-1}$ in each of $N(xv)$ and $N(yx)$. We claim that $|V_1'| < 2\a n$. Otherwise, since a ($k-1$)-set intersects at most $(k-1)n^{k-2}$ other ($k-1$)-sets, there are at least
\[
2\a n\cdot \a n^{k-1} (\a n^{k-1} - (k-1)n^{k-2})  > \a^3 n^{2k-1}
\]
copies of ($2k-1$)-sets connecting $x$ and $y$, a contradiction. 

Let $\tilde{V}_1 := V_1\setminus V_1'$. The following claim is an analog of Claim~\ref{clm:v1} for Lemma \ref{lem3} and can be proved similarly. The only difference between their proofs is that here we find at least $\a n^{k-1}$ copies of $K_{k-1}$ in each of $N(xv)$ and $N(yv)$, which contradicts the definition of $\tilde V_1$.
\begin{claim}\label{clm:2v1}
Given $v\in \tilde V_1$ and $2\le i \le k$, there exists $j\in [k]$ such that $|N_{A_{ij}}(v)|<\a'' t$. \qed
\end{claim}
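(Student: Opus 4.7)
My plan is to proceed by contradiction, following the structure of Claim~\ref{clm:v1}. Suppose there exist $v\in \tilde V_1$ and some $i_0\in\{2,\dots,k\}$ with $|N_{A_{i_0 j}}(v)|\ge \a'' t$ for every $j\in[k]$. The goal is to show that both $N(xv)$ and $N(yv)$ then contain at least $\a n^{k-1}$ copies of $K_{k-1}$, which forces $v\in V_1'$ and so contradicts $v\in\tilde V_1$.

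For the count in $N(xv)$, observe that $N(x)\cap V_i=\bigcup_{j=1}^{k-1}A_{ij}$, so only columns in $[k-1]$ are admissible. The minimum partite degree condition combined with~\eqref{eq:2size} implies that for each row $i\in\{2,\dots,k\}$ there is at most one column $j\in[k-1]$ with $|N_{A_{ij}}(v)|<t/3$. I would therefore use a greedy (Hall-type) argument to pick pairwise distinct indices $j_i\in[k-1]$ for $i\in\{2,\dots,k\}\setminus\{i_0\}$ with $|N_{A_{ij_i}}(v)|\ge t/3$, and then take $j_{i_0}\in[k-1]$ to be the unique remaining index, for which $|N_{A_{i_0 j_{i_0}}}(v)|\ge \a''t$ holds by hypothesis. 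The same procedure, restricted to columns in $\{2,\dots,k\}$, handles the count in $N(yv)$.

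To conclude, I would bound the pairwise co-densities of the chosen neighborhoods via Claim~\ref{clm:2dia}, exactly as in~\eqref{eq:v1}. The key point is that the chosen columns all lie in $[k-1]$ (respectively in $\{2,\dots,k\}$), so the forbidden configuration $\{j_i,j_{i'}\}=\{1,k\}$ never arises and Claim~\ref{clm:2dia}(1) applies to every pair. This yields co-density bounds of order $\a'/\a''=O(\sqrt{\a'}/k)$, which is small enough for Proposition~\ref{clm:kk} to deliver at least $\tfrac12\,\a''t\cdot(t/3)^{k-2}\gg \a n^{k-1}$ copies of $K_{k-1}$ in each of $N(xv)$ and $N(yv)$, yielding the contradiction.

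The only substantive deviation from the proof of Claim~\ref{clm:v1} is the need to avoid the forbidden pair $\{1,k\}$ during column selection; this is precisely what motivates splitting the two counts across the column subsets $[k-1]$ and $\{2,\dots,k\}$. Everything else is a routine transcription, with parameter arithmetic handled by the hierarchy $\a\ll\a'\ll\a''\ll 1$.
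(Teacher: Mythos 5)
Your proof is correct and follows the same route the paper intends: the paper states that Claim~\ref{clm:2v1} is an analog of Claim~\ref{clm:v1} whose only new feature is producing $\alpha n^{k-1}$ copies of $K_{k-1}$ in each of $N(xv)$ and $N(yv)$, and your idea of running the greedy column-selection once inside $[k-1]$ (for $N(xv)$) and once inside $\{2,\dots,k\}$ (for $N(yv)$), which in both cases keeps the forbidden pair $\{1,k\}$ out of Claim~\ref{clm:2dia}, is exactly the intended adaptation. The parameter arithmetic ($\alpha'/\alpha''=\sqrt{\alpha'}/(2k)$ is small and $\tfrac12\alpha''t(t/3)^{k-2}\gg\alpha n^{k-1}$) also checks out under the hierarchy $\alpha\ll\alpha'\ll\alpha''\ll 1$.
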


Fix an vertex $v\in \tilde V_1$. Claim~\ref{clm:2v1} implies that for each $2\le i\le k$, there exists $\ell_i$ such that $|N_{A_{i \ell_i}}(v)| < \a'' t$. Our next claim is an analog of Claim \ref{clm:li=l2} for Lemma \ref{lem3} and can be proved similarly.

\begin{claim}\label{clm:2li=l2}
We have $\ell_2 = \ell_3 = \cdots = \ell_k$. \qed
\end{claim}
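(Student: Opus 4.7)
The plan is to mimic the proof of Claim~\ref{clm:li=l2} closely, but now producing many copies of $K_{k-1}$ inside \emph{each} of the two neighborhoods $N(xv)$ and $N(yv)$; this will force $v \in V_1'$, contradicting $v \in \tilde V_1$.

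Assume for contradiction that $\ell_2 \ne \ell_3$. The minimum degree condition together with \eqref{eq:2size} and Claim~\ref{clm:2v1} give, by the same calculation used in the proof of Lemma~\ref{lem3}, that $|N_{A_{ij}}(v)| \ge |A_{ij}| - 2\alpha'' t \ge t/2$ for every $j \ne \ell_i$. We now pick two column-bijections. For $N(xv)$, since $A_{ik} \cap N(x) = \emptyset$, we want a bijection $\sigma : \{2,\dots,k\} \to [k-1]$ with $\sigma(i) \ne \ell_i$ for all $i$. For $N(yv)$, since $A_{i1} \cap N(y) = \emptyset$, we want a bijection $\tau : \{2,\dots,k\} \to \{2,\dots,k\}$ with $\tau(i) \ne \ell_i$ for all $i$. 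In each case the only way such a bijection can fail to exist is if all $\ell_i$'s coincide at a single value in the target range; the assumption $\ell_2 \ne \ell_3$ rules this out (a direct Hall-type check, or an explicit greedy construction as in Claim~\ref{clm:li=l2} starting from $\sigma(2) = \ell_3$ when $\ell_3 \in [k-1]$, and handled case-by-case when $\ell_3 \in \{1,k\}$).

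Given $\sigma$, the pair $\{\sigma(i),\sigma(i')\} \subseteq [k-1]$ never equals $\{1,k\}$, so Claim~\ref{clm:2dia}(1) yields $d(A_{i\sigma(i)}, A_{i'\sigma(i')}) \ge 1 - 3\alpha'$. Together with $|N_{A_{i\sigma(i)}}(v)| \ge t/2$ and $|A_{i\sigma(i)}| \le (1+\alpha')t$, this gives
\[
\bar d\bigl(N_{A_{i\sigma(i)}}(v),\, N_{A_{i'\sigma(i')}}(v)\bigr) \le 27\alpha' \le (k+1)^{-4}
\]
for $\alpha$ small enough as in \eqref{eq:ad}. Proposition~\ref{clm:kk} then supplies at least $\tfrac12 \prod_{i=2}^{k} |N_{A_{i\sigma(i)}}(v)| \ge (t/2)^{k-1}/2 > \alpha n^{k-1}$ copies of $K_{k-1}$ inside $\bigcap_i N_{A_{i\sigma(i)}}(v) \subseteq N(x)\cap N(v) = N(xv)$. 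The identical computation with $\tau$ (the pair $\{\tau(i),\tau(i')\} \subseteq \{2,\dots,k\}$ also avoids $\{1,k\}$) produces at least $\alpha n^{k-1}$ copies of $K_{k-1}$ in $N(yv)$. Hence $v \in V_1'$, the desired contradiction. The only genuinely new ingredient compared with Claim~\ref{clm:li=l2} is the Hall-style existence check for the two bijections; once that is secured, the density transfer and the $K_{k-1}$-count are essentially a repetition of the corresponding step there, and the choice of $\alpha$ in \eqref{eq:ad} is comfortably small enough to accommodate $27\alpha' \le (k+1)^{-4}$ and $(t/2)^{k-1}/2 > \alpha n^{k-1}$.
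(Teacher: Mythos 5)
Your proof is correct and is essentially the argument the paper intends: the paper merely says Claim~\ref{clm:2li=l2} ``can be proved similarly'' to Claim~\ref{clm:li=l2}, and your two-bijection version---one $\sigma$ into $[k-1]$ giving many $K_{k-1}$'s in $N(xv)$ and one $\tau$ into $\{2,\dots,k\}$ giving many $K_{k-1}$'s in $N(yv)$, so that $v\in V_1'$---is exactly the right adaptation, with the Hall-type existence check being a slightly cleaner way to organize the bijection step than transcribing the greedy construction of Claim~\ref{clm:li=l2} (which would need separate casework when $\ell_3\in\{1,k\}$). The only slip is cosmetic: the constructed copies of $K_{k-1}$ lie in $\bigcup_{i=2}^{k} N_{A_{i\sigma(i)}}(v)$, not $\bigcap_{i} N_{A_{i\sigma(i)}}(v)$ (the latter is empty since the parts are disjoint), and this does not affect the argument.
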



We now define $A_{1j}:= \{ v\in \tilde V_1: |N_{A_{2j}}(v)|<\a'' t \}$ for $ j \in [k]$. By Claims~\ref{clm:2v1} and \ref{clm:2li=l2}, this yields a partition of $\tilde V_1 = \bigcup_{j=1}^k A_{1j}$ such that
\begin{equation}\label{eq:2den1}
d(A_{1j}, A_{ij})< \frac{\a''t |A_{1j}|}{|A_{1j}||A_{ij}|} \le \frac{\a''t}{(1 - \a')t} < (1 + 2\a') \a'' \text{ for } i\ge 2 \text{ and } j\ge 1.
\end{equation}
For $v\in A_{1j}$, we have $|N_{A_{ij}}(v)| < \alpha'' t$ for $i\ge 2$. By the minimum degree condition and \eqref{eq:2size},
\begin{equation}
\label{eq:2deg1}
|A^c_{ij} \setminus N(v)| \le (\tfrac1k + \alpha)n - (|A_{ij}|- \alpha''t)< 2\a'' t.
\end{equation}
By \eqref{eq:2size} and \eqref{eq:2deg1}, we derive that
\begin{equation}\label{eq:2den1dia}
\bar d(A_{1j}, A_{ij'})<\frac {|A_{1j}| \cdot 2\a'' t }{|A_{1j}| |A_{ij'}|} \le \frac{2\a'' t}{(1-\a')t} < 3\a'' \text{ for } i\ge 2 \text{ and } j\neq j'.
\end{equation}
We claim that $|A_{1j}| \le (1+ \a)t + (1 + 2\a') \a''|A_{1j}|$ for all $j$. Otherwise, by the minimum degree condition, we have $\deg_{A_{1j}} (v)> (1 + 2\a')\a'' |A_{1j}|$ for all $v\in A_{ij}$, and consequently $d(A_{1j},A_{ij}) > (1 + 2\a') \a''$, contradicting \eqref{eq:2den1}. We thus conclude that
\begin{equation}\label {eq:2size1u}
|A_{1j}|\le \frac{1+\a}{1- (1 + 2\a')\a''} t< (1+2\a'' )\frac nk.
\end{equation}
Since $|V'_1|\le 2\a n$, we have $|\bigcup_{j=1}^{k} A_{1j}| = |V_1\setminus V'_1| \ge |V_1| - 2\a n$.
Using \eqref{eq:2size1u}, we now obtain a lower bound for $|A_{1j}|$, $j\in[k]$.
\begin{equation}\label {eq:2size1l}
|A_{1j}| \ge n - (k-1)(1+ 2 \alpha'' )\frac{n}{k} - 2 \alpha n \ge (1 - 2k\a'')\frac{n}{k}.
\end{equation}

It remains to show that $d(A_{i1},A_{i'1})$ and $d(A_{ik},A_{i'k})$, $2\le i,i'\le k$, are small.
First we show that if both densities are reasonably large then there are too many reachable $(2k-1)$-sets from $x$ to $y$. The proof resembles the one of Claim~\ref{clm:row1}.

\begin{claim} \label{clm:row1k1}
For $2\le i \ne i'\le k$, either $d(A_{i1},A_{i'1})\le 6\a''$ or $d(A_{ik},A_{i'k})\le 6\alpha''$.
\end{claim}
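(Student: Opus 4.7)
The plan is to argue by contradiction. Suppose that for some $2\le i\ne i'\le k$ both $d(A_{i1},A_{i'1})>6\alpha''$ and $d(A_{ik},A_{i'k})>6\alpha''$. I will produce at least $\alpha^3 n^{2k-1}$ distinct $(2k-1)$-sets connecting $x$ to $y$, contradicting the standing assumption. Each such set will have the form
\[
S=T\cup\{u_1,u_2,u_1',u_2'\}\cup R,
\]
where $u_1u_2\in E(G)$ lies in $A_{i1}\times A_{i'1}$, $u_1'u_2'\in E(G)$ lies in $A_{ik}\times A_{i'k}$, $T$ is a $(k-3)$-clique with one vertex in $A_{j,\sigma(j)}$ for each $j\in[k]\setminus\{1,i,i'\}$, and $R$ is a $(k-2)$-clique with one vertex in $A_{j,\sigma(j)}$ for each $j\in[k]\setminus\{i,i'\}$ (including $j=1$). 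Here $\sigma\colon[k]\setminus\{i,i'\}\to\{2,\dots,k-1\}$ is any bijection with $\sigma(1)=2$, and I insist that every vertex of $T\cup R$ be adjacent to all four of $u_1,u_2,u_1',u_2'$. A direct check confirms that $\{x,u_1,u_2\}\cup T$ and $\{u_1',u_2'\}\cup R$ form a $K_k$-factor of $S\cup\{x\}$, while $\{y,u_1',u_2'\}\cup T$ and $\{u_1,u_2\}\cup R$ form a $K_k$-factor of $S\cup\{y\}$; thus $S$ connects $x$ to $y$.

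For counting I fix the $V_1$-vertex $r_1\in A_{1,2}$ of $R$ first. By the definition of $A_{1,2}$ and the per-vertex bound \eqref{eq:2deg1}, every such $r_1$ satisfies $|A_{i'',c}\setminus N(r_1)|\le 2\alpha''t$ for each $i''\ge 2$ and each $c\ne 2$, in particular for $c\in\{1,k\}$. Discounting these from the hypothesized edge densities gives at least $\Omega(\alpha'')(n/k)^2$ edges $u_1u_2$ inside $(A_{i1}\cap N(r_1))\times(A_{i'1}\cap N(r_1))$, and similarly $\Omega(\alpha'')(n/k)^2$ edges $u_1'u_2'$ inside $N(r_1)$. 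Further restricting to $(u_1,u_2,u_1',u_2')$ that are $\sqa$-typical to the $k-3$ target cells $A_{j,\sigma(j)}$, $j\in[k]\setminus\{1,i,i'\}$ (each having density $\ge 1-3\alpha'$ with the boundary-column cells by Claim~\ref{clm:2dia}, since $\sigma(j)\in\{3,\dots,k-1\}$ avoids both boundary columns), costs only an $O(k\sqa)$-fraction, which is $O(\alpha''/k)\ll\alpha''$. So the number of good quadruples remains $\Omega((\alpha'')^2)(n/k)^4$. Greedy selection then gives $\Omega((n/k)^{k-3})$ valid $T$'s and $\Omega((n/k)^{k-3})$ valid choices for $R\setminus\{r_1\}$, pairwise adjacencies within $T$ and within $R$ following from Claim~\ref{clm:2dia}, and adjacency of $r_1$ to the remaining $R$-vertices from \eqref{eq:2den1dia} (applicable because $\sigma(j)\ne\sigma(1)=2$ for every $j\ne 1$). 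Summing over $r_1\in A_{1,2}$ and dividing by the bounded overcounting factor yields at least $\Omega((\alpha'')^2(n/k)^{2k-1})$ distinct $(2k-1)$-sets; since $(\alpha'')^2\asymp\alpha'\ge\alpha^{1/2^{k-3}}\gg\alpha^3$, this exceeds $\alpha^3 n^{2k-1}$ and produces the contradiction.

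The main technical obstacle will be the $V_1$-slot $r_1$. The only density information \eqref{eq:2den1dia} provides against the boundary-column cells is $\ge 1-3\alpha''$, whose induced typicality fraction $\sqrt{3\alpha''}$ is \emph{larger} than the $6\alpha''$ edge-density budget coming from the hypotheses, so a naive typicality argument at the $V_1$-slot would collapse the entire count. My resolution is to sidestep typicality at $r_1$ and instead use the deterministic per-vertex inequality \eqref{eq:2deg1}, which applies to \emph{every} $r_1\in A_{1,2}$ and absorbs only $4\alpha''$ out of $6\alpha''$, leaving a positive surplus $2\alpha''$ for the rest of the argument. All remaining typicality losses concern only middle-column cells in rows $\ne 1$, where Claim~\ref{clm:2dia} supplies densities $\ge 1-3\alpha'$ and the losses are of order $\sqa\ll\alpha''$, hence harmless.
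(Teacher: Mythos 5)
Your construction is correct but genuinely different from the paper's. Both proofs fix a vertex $r_1=v_1\in A_{12}$ and count $(2k-1)$-sets around it, but the decompositions differ. The paper takes $S=\{v_1\}\cup C_1\cup C_2$ where $C_1\subset N(xv_1)$ and $C_2\subset N(yv_1)$ are vertex-disjoint $K_{k-1}$'s (the column-$1$ boundary edge sits inside $C_1$, the column-$k$ edge inside $C_2$), and the factors of $S\cup\{x\}$ and $S\cup\{y\}$ are obtained by swapping which of $C_1,C_2$ the pivot $v_1$ joins. Your decomposition instead keeps two middle cliques $T$ and $R\setminus\{r_1\}$ fixed and lets the two boundary edges $u_1u_2$, $u_1'u_2'$ swap between them, which forces the stronger requirement that \emph{both} middle cliques be simultaneously adjacent to all four boundary vertices; nevertheless this is affordable, so the route goes through. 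Your observation that the $V_1$-slot must be handled via the per-vertex bound \eqref{eq:2deg1} rather than typicality is exactly the same mechanism the paper uses for $v_1$.

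Two inaccuracies are worth flagging, one quantitative and one a citation slip. First, your claim that the typicality losses at the boundary slots amount to ``$O(k\sqa)$-fraction, which is $O(\alpha''/k)\ll\alpha''$'' is wrong: since $\sqa=\sqrt{3\alpha'}$ and $\alpha''=2k\sqrt{\alpha'}$, we have $k\sqa=\tfrac{\sqrt3}{2}\alpha''$, which is $\Theta(\alpha'')$, \emph{not} $O(\alpha''/k)$. The per-edge surplus is therefore not ``$2\alpha''$ minus something negligible'' but
\[
2\alpha'' - 2(k-3)\sqa \;=\; \bigl[(4-2\sqrt3)k + 6\sqrt3\bigr]\sqrt{\alpha'}\;>\;0,
\]
which is positive for all $k\ge3$ (and equals $12\sqrt{\alpha'}$ at $k=3$, its minimum), so the conclusion is unaffected, but the ``$\ll\alpha''$'' reasoning should not be trusted as written. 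Second, when you justify ``adjacency of $r_1$ to the remaining $R$-vertices from \eqref{eq:2den1dia},'' the correct reference is again the deterministic per-vertex inequality \eqref{eq:2deg1} (with $A_{j,\sigma(j)}\subset A^c_{j2}$), since \eqref{eq:2den1dia} is a density statement and does not give the per-vertex guarantee you need for a fixed $r_1$; you already made the right distinction in your final paragraph, so this is just an internal inconsistency.
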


\begin{proof}
Suppose instead, that say $d(A_{(k-1)1},A_{k1}),d(A_{(k-1)k},A_{kk})> 6 \alpha''$. 
Fix a vertex $v_1$ in $A_{1j}$, for some $2\le j\le k-1$, say $v_1\in A_{12}$. 
We construct two vertex disjoint copies of $K_{k-1}$ in $N(xv_1)$ and $N(yv_1)$ as follows. We first select $k-3$ sets $A_{ij}$ with $2\le i\le k-2$ and $3\le j\le k-1$ such that no two of them are on the same row or column -- there are $(k-3)!$ choices. Fix one of them, say
$A_{23},\cdots,A_{(k-2)(k-1)}$. For $2 \le i\le k-2$, we select $v_i\in N_{A_{i(i+1)}}(v_1 \cdots v_{i-1})$ that is $\sqa$-typical to $A_{(k-1)1}$, $A_{k1}$ and $A_{j(j+1)}$, $i< j\le k-2$. By Claim~\ref{clm:2dia} and \eqref{eq:2deg1}, there are at least
\[
(1- (k-2)\sqa) |A_{i(i+1)}| - (k\alpha+\alpha' + \alpha'') \frac{n}{k} \ge \frac{n}{2k}
\]
such $v_i$.
After selecting $v_2, \dots, v_{k-2}$, we select two adjacent vertices $v_{k-1}\in A_{(k-1)1}$ and $v_k\in A_{k1}$ such that $v_{k-1}$ and $v_k$ are in $N(v_1 \cdots v_{k-2})$. For $j=k-1, k$, we know that $N(v_1)$ misses at most $(k\alpha+ \alpha' + \alpha'') n/k$ vertices in $A_{j1}$ and at most $(k-3)\sqa |A_{j1}|$ vertices of $A_{j1}$ are not in $N(v_2 \cdots v_{k-2})$. Since $d(A_{(k-1)1},A_{k1})> 6 \alpha''$, there are at least
\begin{align*}
6\alpha'' |A_{(k-1)1}| |A_{k1}|  - (k\alpha+\alpha' + \alpha'') \frac{n}{k} (|A_{(k-1)1}| + |A_{k1}|) \\
- 2 (k-3)\sqa |A_{(k-1)1}| |A_{k1}| \ge 6 \sqrt{\alpha'} \left(\frac{n}{k} \right)^2
\end{align*}
such pairs $v_{k-1}, v_k$. Hence $N(xv_1)$ contains at least
\[
(k-3)! \left(\frac{n}{2k} \right)^{k-3} 6 \sqrt{\alpha'} \left(\frac{n}{k} \right)^2 \ge \sqrt{\alpha'} \left(\frac{n}{k} \right)^{k-1} \ge \sqrt{\alpha} n^{k-1}
\]
copies of $K_{k-1}$. Let $C$ be such a copy of $K_{k-1}$. Then we follow the same procedure and construct a copy of $K_{k-1}$ on $N(yv_1)\setminus C$. After fixing $k-3$ sets $A_{ij}$ with $2\le i\le k-2$ and $3\le j\le k-1$ such that no two of them are on the same row or column, there are still at least $\frac{n}{2k}$ such $v_i$ for $2\le i\le k-2$.
Then, as $d(A_{ik},A_{i'k})> 6 \alpha''$, there are at least $6 \sqrt{\alpha'} \left(\frac{n}{k} \right)^2 $ choices of $v_{k-1}\in A_{(k-1)k}$ and $v_k\in A_{kk}$ such that $v_{k-1}$ and $v_k$ are in $N(v_1 \cdots v_{k-2})$. This gives at least $\sqrt{\alpha} n^{k-1}$ copies of $K_{k-1}$ in $N(yv_1)$. Then, since there are at least $|V_1| - |A_{11}| - |A_{1k}| \ge \a n$ choices of $v_1$, totally there are at least $\a n (\sqrt{\a} n^{k-1})^2 = \a^2 n^{2k-1}$ reachable $(2k-1)$-sets from $x$ to $y$, a contradiction.
\end{proof}

Next we show that if any of $d(A_{i1},A_{i'1})$ or $d(A_{ik},A_{i'k})$, $2\le i,i'\le k$, is sufficiently large, then we can remove edges from $G$ such that the resulting graph still satisfies the minimum degree condition, which contradicts the assumption that $G$ is minimal.
\begin{claim}\label{clm:row1k2}
For $2\le i \ne i'\le k$, $d(A_{i1},A_{i'1}), d(A_{ik},A_{i'k}) \le 6k \sqrt{\alpha''}$.
\end{claim}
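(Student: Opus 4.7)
The plan is to assume for contradiction that $d(A_{(k-1)1},A_{k1})>6k\sqrt{\alpha''}$ (the case $d(A_{ik},A_{i'k})>6k\sqrt{\alpha''}$ being symmetric), and then exhibit an edge of $G$ whose removal preserves the minimum-degree condition, contradicting the minimality of $G$. By Claim \ref{clm:row1k1} applied to $(i,i')=(k-1,k)$, we simultaneously have $d(A_{(k-1)k},A_{kk})\le 6\alpha''$.

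First I will extract two auxiliary density bounds. For any $v\in A_{(k-1)k}$, the inequality $\deg_{V_k}(v)\ge \delta=(1-1/k)n-\alpha n$ combined with $e(A_{(k-1)k},A_{kk})\le 6\alpha''|A_{(k-1)k}||A_{kk}|$ and the trivial bounds $e(A_{(k-1)k},A_{kj})\le|A_{(k-1)k}||A_{kj}|$ for $2\le j\le k-1$ forces $d(A_{(k-1)k},A_{k1})\ge 1-O(k\alpha'')$; a symmetric argument swapping the roles of the first and $k$-th columns gives $d(A_{(k-1)1},A_{kk})\ge 1-O(k\alpha'')$. Summing $e(A_{(k-1)1},V_k)$ using Claim \ref{clm:2dia} on the middle columns, these two new estimates on the extremal columns, and the hypothesis, I obtain $e(A_{(k-1)1},V_k)\ge \delta|A_{(k-1)1}|+c\sqrt{\alpha''}\,n\,|A_{(k-1)1}|$ for an absolute $c>0$. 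Since each vertex has slack at most $n-\delta\approx n/k$, this produces a set $L_{k-1}\subseteq A_{(k-1)1}$ of size at least $\Omega(k\sqrt{\alpha''})|A_{(k-1)1}|$ of \emph{loose} vertices, namely those with $\deg_{V_k}(u)>\delta$; the symmetric computation on the $A_{k1}$ side yields a large $L_k\subseteq A_{k1}$.

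Next I will upper-bound the contribution of tight (non-loose) vertices to $e(A_{(k-1)1},A_{k1})$. For a typical tight $u\in A_{(k-1)1}$---typical in the sense of Claim \ref{clm:2dia} toward $A_{k2},\dots,A_{k(k-1)}$ and additionally $\sqrt{9k\alpha''}$-typical toward $A_{kk}$ via the density just derived---the identity $\deg_{V_k}(u)=\lceil\delta\rceil$ together with the typicality lower bounds forces $\deg_{A_{k1}}(u)\le 3\sqrt{k\alpha''}\,(n/k)$. Combined with the bound $|N_{k-1}|\le O(\sqrt{k\alpha''})|A_{(k-1)1}|$ on the number of non-typical vertices, this gives $e(T_{k-1},A_{k1})\le O(\sqrt{\alpha''})\,n^2/k^{3/2}$, where $T_{k-1}:=A_{(k-1)1}\setminus L_{k-1}$, with the symmetric bound for $e(A_{(k-1)1},T_k)$. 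Since the hypothesis yields $e(A_{(k-1)1},A_{k1})>6\sqrt{\alpha''}\,n^2/k$, the residual strictly exceeds the tight contribution and forces $e(L_{k-1},L_k)>0$. Any such edge $uv$ has $\deg_{V_k}(u)>\delta$ and $\deg_{V_{k-1}}(v)>\delta$, so removing $uv$ leaves a graph $G'$ with $\delta^*(G')\ge\delta$, contradicting the minimality of $G$.

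The main obstacle is the tight quantitative balance at the final counting step: the threshold $6k\sqrt{\alpha''}$ in the hypothesis and the factor $3\sqrt{k\alpha''}\,(n/k)$ in the degree bound for tight typical vertices must be aligned so that the gap between the hypothesized edge count and the tight-plus-non-typical contribution is strictly positive for every $k\ge 3$. The non-typical contribution in particular must be controlled using the relation $\sqrt{\alpha'}=\alpha''/(2k)$, which ensures it is dominated by $\sqrt{k\alpha''}$ and does not overwhelm the estimate; this is what dictates the specific constant $6k$ in the statement.
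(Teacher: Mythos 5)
Your overall strategy is the same as the paper's: locate vertices in $A_{(k-1)1}$ and $A_{k1}$ whose degree toward the other class strictly exceeds $\delta^*(G)=\lceil\delta\rceil$, find an edge between two such vertices, and delete it to contradict minimality. The paper does this by splitting the typical vertices $A_{ik}^T$ directly into $T_1/T_2$ and then counting $e(A_{2k},A_{3k})$ in four terms; you do it by averaging degrees to manufacture a ``loose'' set $L_{k-1}$ and then bounding the contribution of the complementary ``tight'' set. Those are two organizations of the same idea, so the route is not fundamentally different.

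There is, however, a quantitative error that makes your proposal fail for the very cases $k=3,4$ that the paper needs. You assert $d(A_{(k-1)1},A_{kk})\ge 1-O(k\alpha'')$ and accordingly use $\sqrt{9k\alpha''}$-typicality and a tight-degree bound of $3\sqrt{k\alpha''}\,(n/k)$. But the argument you yourself outline — summing the minimum-degree inequality $\deg_{V_{k-1}}(w)\ge\delta$ over $w\in A_{kk}$ and subtracting $\bar e(A_{(k-1)k},A_{kk})\ge(1-6\alpha'')|A_{(k-1)k}||A_{kk}|$ — actually gives
\[
\bar e(A_{(k-1)1},A_{kk})\le (1+k\alpha)\tfrac nk|A_{kk}|-(1-6\alpha'')(1-\alpha')\tfrac nk|A_{kk}|\le 7\alpha''\tfrac nk|A_{kk}|,
\]
so $d(A_{(k-1)1},A_{kk})\ge 1-8\alpha''$, with no factor of $k$. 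The spurious $k$ in your estimate propagates into a spurious $\sqrt k$ in the typicality and in the tight-degree bound, and at the final comparison your inequality becomes (ignoring lower-order terms)
\[
6k\sqrt{\alpha''}\,|A_{(k-1)1}||A_{k1}| \;>\; e(T_{k-1},A_{k1})+e(A_{(k-1)1},T_k)\;\approx\;12\sqrt{k\alpha''}\,|A_{(k-1)1}||A_{k1}|,
\]
which requires $\sqrt k>2$, i.e. $k\ge 5$. With the correct $\sqrt k$-free bound the right-hand side is $\approx 12\sqrt{\alpha''}\,|A_{(k-1)1}||A_{k1}|$ and the comparison $6k>12$ holds for all $k\ge 3$. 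So the gap is fixable, but as written your numbers do not close for $k=3,4$; your own ``main obstacle'' paragraph flags exactly this tension without resolving it. For comparison, the paper sidesteps the column-by-column bookkeeping by working with typicality toward the entire complement $A_{3k}^c$; its count $e(A_{2k},A_{3k})\le\sqrt{8\alpha''}\bigl(|A_{2k}|\,n+|A_{3k}|\,n\bigr)\le 6k\sqrt{\alpha''}\,|A_{2k}||A_{3k}|$ then produces the threshold in the claim with no extraneous $\sqrt k$.
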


\begin{proof}
Without loss of generality,  assume that $d(A_{2k}, A_{3k}) > 6k \sqrt{\a''}$. By Claim~\ref{clm:row1k1},
we have $d(A_{21},A_{31})< 6\alpha''$. Combining this with \eqref{eq:2den middle}, we have $d(A_{2j},A_{3j})< 6\alpha''$ for all $j\in [k-1]$. Now fix $j\in[k-1]$. The number of non-edges between $A_{2j}$ and $A_{3j}$ satisfies $\bar{e} (A_{2j},A_{3j}) > (1 - 6\a'') |A_{2j}| |A_{3j}|$. By the minimum degree condition and \eqref {eq:2size},
\[
\bar {e} (A_{2k},A_{3j}) < (1 +\ k \alpha) \frac{n}{k} |A_{3j}|-(1- 6\alpha'')|A_{2j}||A_{3j}| \le 7\alpha'' \frac{n}{k} |A_{3j}|. 
\]
Using $\eqref {eq:2size}$ again, we obtain that
\[
d(A_{2k},A_{3j})\ge 1-\frac{7 \a'' \tfrac{n}{k} |A_{3j}|}{|A_{2k}||A_{3j}|}\ge 1-8\alpha''.
\]
This implies that $d(A_{2k},A_{3k}^c)\ge 1- 8\alpha''$. Similarly we derive that $d(A_{3k},A_{2k}^c)\ge 1- 8\alpha''$. For $i = 2,3$, define $A_{ik}^T$ as the set of the vertices in $A_{ik}$ that are  $\sqrt{8\alpha''}$-typical to $A_{(5-i)k}^c$. Thus $|A_{ik}\setminus A_{ik}^T|\le \sqrt{8\alpha''}|A_{ik}|$.

Let $A_{ik}^{T_1}=\{v\in A_{ik}^T: \deg_{A_{(5-i)k}}(v)\le \sqrt{8\alpha''}|A_{jk}^c|\}$ and $A_{ik}^{T_2}=A_{ik}^T\setminus A_{ik}^{T_1}$. For $u\in A_{2k}^{T_2}$, we have
\[
\deg_{V_3} (u) = \deg_{A^c_{3k}}(u) + \deg_{A_{3k}}(u) > ( 1 -\sqrt{8\alpha''} ) |A_{3k}^c| +\sqrt{8\alpha''}|A_{3k}^c| = |A_{3k}^c|.
\]
Since $|A_{3k}^c| \ge \deg_{V_3} (x)$ and $|A_{3k}^c|$ is an integer, we conclude that $\deg_{V_3} (u)\ge  \deg_{V_3} (x) + 1$. Similarly we can derive that $\deg_{V_2} (v)\ge  \deg_{V_2} (x) + 1$ for every $v\in A_{3k}^{T_2}$.
If there is an edge $uv$ joining some $u\in A_{2k}^{T_2}$ and some $v\in A_{3k}^{T_2}$, then we can delete this edge and the resulting graph still satisfies the minimum degree condition. Therefore we may assume that there is no edge between $A_{2k}^{T_2}$ and $A_{3k}^{T_2}$. Then
\begin{align*}
e(A_{2k},A_{3k})& \le e(A_{2k}\setminus A_{2k}^T,A_{3k}) + e(A_{2k},A_{3k}\setminus A_{3k}^T) + e(A_{2k}^{T_1}, A_{3k}^T) + e(A_{2k}^{T}, A_{3k}^{T_1}) \\
                            &\le 2\sqrt{ 8\alpha''}|A_{2k}||A_{3k}|+|A_{2k}^{T_1}|\sqrt{8\alpha''}|A_{3k}^c|+|A_{3k}^{T_1}|\sqrt{8\alpha''}|A_{2k}^c|\\
                            &\le \sqrt{8\alpha''} \left( 2|A_{2k}||A_{3k}| + |A_{2k}| |A_{3k}^c| + |A_{3k}| |A_{2k}^c| \right) \\
                            &= \sqrt{8\alpha''} \left(|A_{2k}| |V_3|+ |A_{3k}| |V_2| \right)\\
                            &\le 3\sqrt{\alpha''} \cdot 2k |A_{2k}||A_{3k}| \quad \text{by } \eqref{eq:2size}.
\end{align*}
Therefore $d(A_{2k},A_{3k})\le 6k \sqrt{\alpha''}$.
\end{proof}

In summary, by \eqref{eq:2size}, \eqref{eq:2size1u} and \eqref{eq:2size1l}, we have $(1-  2k\alpha'') \frac{n}{k} \le |A_{ij}| \le (1 + 2\alpha'' )\frac{n}{k} $ for all $i$ and $j$. In order to make $\bigcup_{j=1}^{k} A_{ij}$ a partition of $V_i$, we move the vertices of $V_1'$ to $A_{11}$ and the vertices of $A_{i0}$ to $A_{i2}$ for $2\le i\le k$. Since $|V_1'| < 2\a n$ and $|A_{i0}| \le (k+1)\a n$, we have $| |A_{ij}| - \tfrac nk | \le 2k \alpha'' \frac{n}{k}$ after moving these vertices. On the other hand, by \eqref{eq:2den middle}, \eqref{eq:2den1}, and Claim~\ref{clm:row1k2}, we have $d(A_{ij}, A_{i'j}) \le 6k \sqrt{\alpha''}$ for $i\ne i'$ and all $j$. (In fact, for $i\ge 2$, we now have $d(A_{11}, A_{i1}) \le 2\alpha''$ as we added at most $2\a n$ vertices to $A_{11}$. For $i'> i\ge 2$, we now have $d(A_{12}, A_{i2}) \le 2\alpha''$ and $d(A_{i2}, A_{i'2}) \le 2\alpha'$ as we moved at most $(k+1)\alpha n$ vertices to $A_{i2}$.) Therefore after deleting edges, $G$ is ($ 2k \alpha'', 6k \sqrt{\alpha''}$)-approximate to $\Theta_{k\times k} (n/k)$. By \eqref{eq:ad}, and the definitions of $\alpha''$ and $\alpha'$,  $G$ is $(\Delta/6, \Delta/2)$-approximate to $\Theta_{k\times k} (n/k)$.
\end{proof}

\section*{Acknowledgments}
The authors thank two referees for their valuable comments that improved the presentation and shortened the proofs of Lemmas \ref{lem:reach} and \ref{lem3}.


\begin{thebibliography}{99}

\bibitem{AlYu2} N. Alon and R. Yuster, $H$-factors in dense
graphs. {\em J. Combin. Theory Ser. B}, {\bf 66} (1996), no. 2,
269--282.

\bibitem{BuZh} A. Bush and Y. Zhao, Minimum degree thresholds for bipartite graph tiling, \emph{J. Graph Theory}, to appear.

\bibitem{CzDe}
A. Czygrinow, L. DeBiasio.  A note on bipartite graph tiling, {\em SIAM J. Discrete
Math}.

\bibitem{Dirac}  G. A. Dirac.  Some theorem on abstract graphs.
{\em Proc. London Math. Soc. (3)}, {\bf 2}, (1952) 69--81.

\bibitem{Fischer} E. Fischer, Variants of the Hajnal-Szemer\'edi
theorem. {\em J. Graph Theory} {\bf 31} (1999), no. 4, 275--282.

\bibitem{HaSz} A. Hajnal and E. Szemer\'edi, Proof of a
conjecture of P. Erd\"os. {\em Combinatorial theory and its
applications, II (Proc. Colloq., Balatonf\"ured, 1969)}, pp.
601--623.  North-Holland, Amsterdam, 1970.

\bibitem{HPS} H. H\`an, Y. Person and M. Schacht, On perfect matchings in uniform hypergraphs with large minimum vertex degree,
\emph{SIAM J. Discrete Math.} {\bf 23} (2009), 732--748.

\bibitem{HlSc}
J. Hladk\'{y} and M. Schacht. Note on bipartite graph tilings,
{\em SIAM J. Discrete Math}, {\bf 24}, (2010), no. 2, 357–362.

\bibitem{KeMy}
P. Keevash and R. Mycroft, A geometric theory for hypergraph matching, submitted.
\bibitem{KeMy2}
P. Keevash and R. Mycroft, A multipartite Hajnal-Szemeredi theorem, submitted.

\bibitem{KK}
H.  Kierstead and A. Kostochka. A short proof of the Hajnal-Szemer\'edi theorem on equitable
colouring. {\em Combinatorics, Probability and Computing}, {\bf 17} (2008), no. 2, 265--270.

\bibitem{Komlos} J. Koml\'os, Tiling  Tur\'{a}n theorems.
{\em Combinatorica} {\bf 20} (2000), no. 2, 203--218.

\bibitem{KSS-posa}
J. Koml\'os, G. N. S\'ark\"ozy, E. Szemer\'edi. On the square of a Hamiltonian
cycle in dense graphs. {\em Random Struct. Algorithms} 9 (1996) 193--211.

\bibitem{Blowup} J. Koml\'os, G. N. S\'ark\"ozy and E.
Szemer\'edi, Blow-up Lemma. {\em Combinatorica} {\bf 17} (1997),
no. 1, 109--123.

\bibitem{KSS-AY}
J. Koml\'os, G. N. S\'ark\"ozy and E. Szemer\'edi, Proof of the
Alon-Yuster conjecture. Combinatorics (Prague, 1998). {\em
Discrete Math.} {\bf 235} (2001), no. 1-3, 255--269.

\bibitem{KuOs}
D. K\"{u}hn and D. Osthus, The minimum degree threshold for perfect
graph packings, {\em Combinatorica} \textbf{29} (2009), 65--107.

\bibitem{KuOs-survey}
D. K\"{u}hn and D. Osthus, Embedding large subgraphs into dense
graphs, {\em Surveys in Combinatorics}, Cambridge University Press, 2009, 137--167.

\bibitem{LSS}
I. Levitt, G. N. S\'ark\"ozy, E. Szemer\'edi. How to avoid using the regularity
lemma: P\'osa's conjecture revisited. {\em Discrete Math}. 310 (2010), no. 3,
630--641.

\bibitem{LoMa}
A. Lo and K. Markstr\"{o}m, A multipartite version of the Hajnal-Szemer\'edi theorem for graphs and hypergraphs, preprint.

\bibitem{MaMa} Cs. Magyar, R. Martin, Tripartite version of the
Corr\'adi-Hajnal theorem. {\em Discrete Math}. \textbf{254} (2002), no.
1-3, 289--308.

\bibitem{MaSz} R. Martin, E. Szemer\'edi, Quadripartite version of
the Hajnal-Szemer\'edi theorem. Discrete Math. \textbf{308} (2008),
no. 19, 4337--4360.

\bibitem{MaZh} R. Martin, Y. Zhao, Tiling tripartite graphs with $3$-colorable graphs. {\em Electronic Journal of Combinatorics} \textbf{16}, (2009).

\bibitem{RRS-di}
V. R\"odl, A. Ruci\'nski, E. Szemer\'edi, A Dirac-type theorem for 3-uniform
hypergraphs, {\em Combin. Probab. Comput.} 15 (2006), no. 1- 2, 229--251.

\bibitem{AliYi} A. Shokoufandeh, Y. Zhao,  Proof of a tiling
conjecture of Koml\'os, {\em Random Structures Algorithms} {\bf
23} (2003), no. 2, 180--205.

\bibitem{Sz} E. Szemer\'edi, Regular partitions of graphs. {\em
Probl\`emes combinatoires et th\'eorie des graphes (Colloq.
Internat. CNRS, Univ. Orsay, Orsay, 1976)}, pp. 399--401, Colloq.
Internat. CNRS, 260, {\em CNRS, Paris}, 1978.

\bibitem{Zh} Y. Zhao, Tiling bipartite graphs, {\em SIAM J. Discrete
Math}. \textbf{23} (2009), no 2, 888--900.
\end{thebibliography}
\end{document}